\documentclass[11pt,reqno]{amsart}
\usepackage[foot]{amsaddr}
\usepackage{amsmath} 
\usepackage{amsfonts,amssymb,amsthm,bm}
\usepackage{amsrefs}
\usepackage{esint}

\usepackage{enumitem}
\usepackage[utf8]{inputenc}
\usepackage[T1]{fontenc}
\usepackage{stackrel}

\usepackage{fullpage}

\usepackage[dvipsnames]{xcolor}

\usepackage{hyperref}


\newcommand{\op}[1]{\operatorname{#1}}

\newcommand\abs[1]{\left|#1\right|} 

\DeclareMathOperator{\diam}{diam} 
\DeclareMathOperator{\dist}{\mathit{d}} 

\let\emptyset\varnothing 
\DeclareMathOperator{\Lip}{Lip} 


\newcommand{\aabs}[1]{|#1|}

\newcommand{\Norm}[2]{\|#1\|_{#2}}

\newcommand{\calL}{\mathcal{L}}

\newcommand{\calQ}{\mathcal{Q}}

\newcommand{\calW}{\mathcal{W}}

\newcommand{\N}{\mathbb{N}}

\newcommand{\R}{\mathbb{R}}

\newcommand{\BMO}{\mathrm{BMO}}

\newcommand{\VC}{\dot{\operatorname{VC}}}


\def\XXint#1#2#3{\mkern3mu{\setbox0=\hbox{$#1{#2#3}{\int}$ }
\vcenter{\hbox{$#2#3$ }}\kern-.6\wd0}}


\theoremstyle{plain}
\newtheorem{theorem}{Theorem}
\newtheorem{propo}[theorem]{Proposition}

\newtheorem{lemma}[theorem]{Lemma}
\newtheorem{corollary}[theorem]{Corollary}
\newtheorem{problem}[theorem]{Problem}

\numberwithin{theorem}{section}

\newtheorem{remark}[theorem]{Remark}

\theoremstyle{definition}
\newtheorem{dfn}[theorem]{Definition}
 \title{Traces of vanishing H\"older spaces}
 
 \author{Kaushik Mohanta}
 
 \address[K.M.]{University of Jyv\"askyl\"a, Department of Mathematics and Statistics, P.O. Box 35, FI-40014 University of Jyv\"askyl\"a, Finland} 
 \email{kaushik.k.mohanta@jyu.fi}

 \author[Carlos Mudarra]{Carlos Mudarra}
\address[C.M.]{Department of Mathematical Sciences, Norwegian University of Science and Technology, 7941 Trondheim, Norway}
\email{carlos.mudarra@ntnu.no}

\author[Tuomas Oikari]{Tuomas Oikari}
\address[T.O.]{University of Helsinki, Department of Mathematics and Statistics, P.O.B. 68, FI-00014, Helsinki, Finland}
\email{tuomas.v.oikari@helsinki.fi}

\date{\today}

\keywords{Extension operator, trace space, H\"older space, vanishing H\"older space}

\subjclass[2020]{26B35, 42B35, 46E35, 26B05, 46T20}
\begin{document}

\maketitle

\begin{abstract} 
	For an arbitrary subset $E\subset\R^n,$ we introduce and study the three vanishing subspaces of the H\"older space $\dot{C}^{0,\omega}(E)$ consisting of those functions for which the ratio $|f(x)-f(y)|/\omega(|x-y|)$ vanishes, when $(1)$ $|x-y|\to 0$ , $(2)$ $|x-y|\to\infty$ or $(3)$ $\min(|x|,|y|)\to\infty.$ 
	We prove that the Whitney extension operator maps each of these vanishing subspaces from $E$ to the corresponding vanishing spaces defined on the whole ambient space $\R^n.$
	In fact, this follows as the zeroth order special case of a more general problem involving higher order derivatives.
	As a consequence, we obtain complete characterizations of approximability of H\"older functions $\dot{C}^{0,\omega}(E)$ by Lipschitz and boundedly supported functions.
\end{abstract}

\section{Introduction and main results}

\subsection{Introduction}\label{sect:intro1}

The vanishing H\"{o}lder spaces $\VC_{\Gamma}^{\omega}(X,Y),$ for the scales $\Gamma\in\{\op{small}, \op{large}, \op{far}\},$ for normed spaces $X,Y$  and for many moduli of continuity $\omega,$ were recently defined and studied by the two last named authors \cite{MudOik24}. In particular, these vanishing H\"older spaces were shown to often provide a complete description of those H\"older functions approximable by Lipschitz or smooth or boundedly supported functions. In this article we continue to study the vanishing H\"{o}lder spaces $\VC_{\Gamma}^{\omega}(X,Y),$ when $X=\R^n$ but $Y$ is allowed to be an arbitrary normed space. 

\smallskip

We are interested in understanding when a given vanishing scale restricted to a proper subset $E\subset\R^n,$ admits a bounded linear extension operator $L:\VC_{\Gamma}^{\omega}(E,Y) \to \VC_{\Gamma}^{\omega}(\R^n,Y)$ to the whole ambient space. Our answer is that for a completely arbitrary subset $E\subset\R^n$,  a single linear bounded extension operator exists and works simultaneously for all the scales $\Gamma\in\{\op{small}, \op{large}, \op{far}\}.$ We use the classical Whitney extension operator.

We do not only consider the possibility of extending the function itself, but also its \textit{putative} derivatives. In particular, we provide a full description of exactly when a jet $\mathcal{A}\in \dot{\op{J}}^{m,\omega}(E,Y)$ is obtained by restricting a function $F \in \dot{C}^{m,\omega}_{\Gamma}(\R^n,Y).$ This amounts to showing that the Whitney extension operator maps the vanishing jet spaces $\dot{\op{J}}^{m,\omega}_{\Gamma}(E,Y)$ to $ \dot{C}^{m,\omega}_{\Gamma}(\R^n,Y),$  i.e. preserves separately each of the vanishing scales. See Section \ref{sect:intro2} for the precise definitions and statements of these results.

\smallskip

One perspective to our results is as follows.
The scale of H\"older spaces $\dot C^{0,\omega}$ can be seen to measure the smoothness of a function relative to the modulus $\omega(t)>0,$ and when $\omega(t) = 0$ the space $\BMO$ of bounded mean oscillations is often substituted as the zeroth order way of measuring smoothness.  As opposed to real-valued H\"older functions $\dot C^{0,\omega}(E,\R^n),$ with $E\subset \R^n$ arbitrary,  being extendable to the whole ambient space $\dot C^{0,\omega}(\R^n,\R)$ by the classical infimal convolution formula $x\mapsto \inf_{y\in E}\{ f(y) + M \omega(|x-y|)\},$ for example, this is far from being true for functions in $\BMO.$ Indeed, given a connected open set (a domain) $\Omega\subset\R^n,$ a classical theorem of Jones \cite{Jon80Ext} tells us that a bounded linear extension operator $L:\BMO(\Omega)\to\BMO(\R^n)$ exists if and only if $\Omega$ is uniform. With $\BMO,$ geometry of the domain appears naturally.  A recent result of Butaev and Gafni \cite[Theorem 3]{ButDaf21} (see also \cite{ButDaf23}) extends Jones' result by providing a single bounded extension operator on $\BMO(\Omega)$ and also on several of its subspaces determined by approximability by nice functions, or by having appropriate vanishing mean oscillations. Our results are in the same spirit as Butaev's and Gafni's, but opposed to Jones';  we consider vanishing subspaces, but get rid of all geometry by reintroducing an order of smoothness $\omega(t)>0.$

\smallskip

It follows from our results that a single linear and bounded extension operator exists for the intersection $\VC^{\omega}$ of the three vanishing scales. 
In particular, for the H\"older moduli $\alpha(t) := t^{\alpha},$ for $\alpha\in(0,1),$ \cite[Theorem 1.13]{MudOik24} states that $\VC^{\alpha}(\R^n)=\op{VMO}^{\alpha}(\R^n)\footnote{We remark that notationally $\op{VMO}^{\alpha} =\op{CMO}^{\alpha}$ for Guo et al.} ,$ where the latter fractional vanishing mean oscillation space was defined in Guo et al. \cite[Theorem 1.7]{GHWY21} and shown to characterizes the off-diagonal $L^p\to L^q$ (for $1<p<q<\infty$) compactness of commutators of many singular integral operators. We also point out that the identification $\dot C^{0,\alpha}(\R^n,\R) = \op{BMO}^{\alpha}(\R^n,\R)$  of the H\"older spaces with the spaces of fractional bounded mean oscillation is due to N. G. Meyers \cite{Mey1964}. 
Moreover, the classes $\VC^{0,\alpha}_{\op{small}}$ are also called the \textit{little H\"older spaces}. These are fundamental in the study of Lipschitz algebras in metric spaces; we refer to the monographs by Weaver \cite[Chapters 4 and 8]{Weaverbook}, and for an exposition of these little H\"older spaces in the setting of Ahlfors regular sets we refer to D. Mitrea, I. Mitrea, and M. Mitrea \cite[Chapter 3]{Mitreabook}. 
\smallskip

For the extension of $\dot C^{0,\omega}$ mappings between subsets of Hilbert spaces, preserving the $\dot C^{0,\omega}$-seminorms, see Gr\"unbaum and Zarantonello \cite{GruZar68}, and the monograph \cite{BenyaminiLindenstrauss} by Benyamini and Lindenstrauss. The fundamental starting point for $C^{m}$ extension of jets in $\R^n$ is the celebrated Whitney Extension Theorem \cite{Wh34}. For Whitney-type $C^{m,\omega}$ extensions of jets in $\R^n,$ we refer to the work of Glaeser \cite{Gl58}, and for Whitney-type extensions of jets generated by Sobolev functions in $\R^n$, see, for instance, the recent paper of Shvartsman \cite{Shv17}. For infinite dimensional results on $C^{1,\omega}$ extension of jets, see the recent paper \cite{AzaMud21} by Azagra and the second named author of the present paper. For extension results for  functions (instead of jets) of order $C^m$ or $C^{m,\omega}$, or for Sobolev functions, see for instance the papers by Brudnyi and Shvartsman \cite{BruShv01}, by Fefferman \cite{Fef05,Fef06}, by Fefferman, Israel and Luli \cite{FefIsrLul14}.

\subsection*{Acknowledgements}
We are grateful to the anonymous referee for carefully reading the manuscript and for their comments, which led to improvements in both clarity and presentation.

K.M. was supported by the Academy of Finland (project no. 323960) and by the Academy of Finland via Centre of Excellence in Analysis and Dynamics Research (project no. 346310)
C.M. was supported by grant no. 334466 of the Research Council of Norway, ``Fourier Methods and Multiplicative Analysis''. 
T.O. was supported by the Finnish Academy of Science and Letters, and by the MICINN (Spain) grant no. PID2020-114167GB-I00.

\subsection{Basic definitions and main results}\label{sect:intro2}

\begin{dfn} Let $\omega:(0,\infty)\to (0,\infty)$ be a modulus of continuity, $E\subset \R^n$ an arbitrary set, $m \in \N \cup \lbrace 0 \rbrace$, and $V$ a normed space. By an $m$-jet on $E$ (to $V$) we simply understand a family of $m+1$ functions $\lbrace A_k: E \to \mathcal{L}^k(\R^n, V) \rbrace_{k=0}^m.$ For us $\mathcal{L}^k(\R^n, V)$ denotes the vector space of all symmetric $k$-linear mappings from $\R^n$ to $V$, and the norm we are using on $\mathcal{L}^k(\R^n, V)$ is the one given by 
	$$
	\|T\|:= \sup\lbrace \|T(u_1,\ldots,u_k)\|_V \, : \, u_1, \ldots, u_k \in \mathbb{S}^{n-1} \rbrace.
	$$

Then we define \textit{the trace jet space of $\dot{C}^{m,\omega}(\R^n,V)$ to $E$}, denoted by $\dot{\op{J}}^{m,\omega}(E,V)$, as the vector space of all $m$-jets $\lbrace A_k \rbrace_{k=0}^m$ on $E$ so that
\begin{equation}\label{eq:norm:jet}
\begin{split}
		&\|\lbrace A_k \rbrace_{k=0}^m \|_{\dot{\op{J}}^{m,\omega}(E,V)} \\ 
	&: = \sup \left\lbrace \frac{\|A_k(x)-\sum_{j=0}^{m-k} \frac{1}{j!}A_{k+j}(y)(x-y)^j\|}{\omega(|x-y|)|x-y|^{m-k}} \: : \: x,y\in E, \, k=0, \ldots, m \right\rbrace < \infty.
\end{split}
\end{equation}
\end{dfn}

\smallskip
It is convenient to clarify that each term $A_{k+j}(y)(x-y)^j$ in the sum of \eqref{eq:norm:jet} is understood as the symmetric $k$-linear mapping $(\R^n)^k \to V$ given by:
$$
A_{k+j}(y)(x-y)^j(u_1,\ldots,u_k):= A_{k+j}(y) \Big ( \overbrace{x-y, \ldots, x-y}^{j \text{ times} }  ,  u_1, \ldots, u_k \Big ), \quad (u_1,\ldots,u_k) \in (\R^n)^k.
$$
In particular, the comparison of the sum of these mappings with $A_k(x)$ in \eqref{eq:norm:jet} is well-defined. In addition, since $A_{k+j}(y) $ is a symmetric $(k+j)$-linear mapping $(\R^n)^{k+j} \to V$, one can distribute the $j$-many vectors $x-y$ among the $k+j$ entries in any order, resulting in the same symmetric $k$-linear mapping as above.

Moreover, it is clear that $\|  \cdot  \|_{\dot{\op{J}}^{m,\omega}(E,V)}$ defines a seminorm on $\dot{\op{J}}^{m,\omega}(E,V)$, and it can be made an actual norm if we fix a point $x_0 \in E$ and define
$$
||| \lbrace A_k \rbrace_{k=0}^m |||_{\op{J}^{m,\omega}(E,V)} := \|\lbrace A_k \rbrace_{k=0}^m \|_{\dot{\op{J}}^{m,\omega}(E,V)} + \max_{k=0,\ldots,m} \| A_k(x_0)\|.
$$
Moreover, if $V$ is a Banach space, this norm is complete, and $ \big( \op{J}^{m,\omega}(E,V),  ||| \cdot|||_{\op{J}^{m,\omega}(E,V)} \big)$ becomes a Banach space.

\begin{remark}\label{rem:m=0}
{\em In the case $m=0,$ the trace space $\dot{\op{J}}^{0,\omega}(E,V)$ coincides with the homogeneous H\"older space $\dot{C}^{0,\omega}(E,V)$, consisting of those functions $f:E \to V$ such that
	\[
	\Norm{f}{\dot{C}^{0,\omega}(E,V)} = \sup_{ \substack{x \neq y \\ x,y \in E}}\frac{\|f(x)-f(y)\|}{\omega(\aabs{x-y})}<\infty.
	\]
	}
\end{remark}

Let us now look at functions that are defined everywhere in $\R^n.$ 
\begin{dfn} Let $\omega:(0,\infty)\to (0,\infty)$ be a modulus of continuity, $m \in \N \cup \lbrace 0 \rbrace,$ and $V$ a normed space. Then $\dot{C}^{m,\omega}(\R^n,V)$ consists of those $F:\R^n \to V$ of class $C^m(\R^n,V)$ so that
	\[
	 \Norm{F}{\dot{C}^{m,\omega}(\R^n,V)} = \sup_{ \substack{x \neq y \\ x,y \in \R^n}}\frac{\|D^m F(x)-D^m F (y)\|}{\omega(|x-y|)}<\infty. 
	\]
\end{dfn}
Here $D^m F: \R^n \to \mathcal{L}^m(\R^n,V) $ denotes the $m$th (total) derivative of $F.$ Under basic assumptions on the modulus $\omega,$ a version of Whitney's extension theorem says that every $m$-jet $\lbrace A_k \rbrace_{k=0}^m \in \dot{\op{J}}^{m,\omega}(E,V)$ is \textit{the restriction} of some $F\in \dot{C}^{m,\omega}(\R^n,V)$ to $E,$ in the sense that
$$
D^k F (y)= A_k(y), \quad y\in E, \: k=0,\ldots,m.
$$
We recall the construction of the Whitney extension operator and give appropriate references in Section \ref{section:sufficienty}.

In this article, we are insterested in subclasses of the space $\dot{C}^{m,\omega}(\R^n,V)$ given by the following vanishing conditions.   
\begin{dfn}\label{def:scales} Let $\omega:(0,\infty)\to (0,\infty)$ be a modulus of continuity, $m\in \N \cup \lbrace 0 \rbrace$, $n\in \N,$ and $V$ a normed space. We define the vanishing scales
	\begin{align*}
		\VC^{m,\omega}_{\op{small}}(\R^n,V) := \Big\{  F\in  \dot{C}^{m,\omega}(\R^n,V) \, : \, \lim_{\delta \to 0}  \sup_{ \substack{x\not=y\in \R^n \\ |x-y| \leq \delta}}\frac{\|D^m F(x)-D^m F(y)\|}{\omega(|x-y|)}  = 0 \Big \},
	\end{align*}
		\begin{align*}
		\VC^{m,\omega}_{\op{large}}(\R^n,V) := \Big\{  F\in  \dot{C}^{m,\omega}(\R^n,V) \, : \, \lim_{\delta \to \infty}  \sup_{ \substack{x, y\in \R^n \\ |x-y| \geq \delta}}\frac{\|D^m F(x)-D^m F(y)\|}{\omega(|x-y|)}  = 0 \Big \},
	\end{align*}
\begin{align*}
	\VC^{m,\omega}_{\op{far}}(\R^n,V) := \Big\{ F\in  \dot{C}^{m,\omega}(\R^n,V) \, : \, \lim_{\delta \to \infty} \sup_{\substack{x\not=y\in \R^n \\ \min(|x|,|y|) \geq \delta}} \frac{\|D^m F(x)- D^m F(y)\|}{\omega(|x-y|)}  = 0 \Big\}.
\end{align*}
Finally we take the intersection of all the scales,
	\[
	\VC^{m,\omega}(\R^n,V) := \VC^{m,\omega}_{\op{small}}(\R^n,V) \cap \VC^{m,\omega}_{\op{far}}(\R^n,V) \cap \VC^{m,\omega}_{\op{large}}(\R^n,V).
	\]
\end{dfn}

Let us next consider the following questions.
\begin{problem}\label{problem:ext}
Given an $m$-jet $\lbrace A_k: E \to \mathcal{L}^k(\R^n, V) \rbrace_{k=0}^m$ on $E$,  a modulus of continuity $\omega$, and one of the scales $\Gamma \in \lbrace \op{small}, \op{large}, \op{far} \rbrace$ defined above, what necessary and sufficient conditions guarantee the existence of $F\in \VC^{m,\omega}_{\Gamma}(\R^n,V)$ so that $D^k F$ restricted to $E$ agrees with $A_k,$ for each $k=0,\ldots,m$? Does there exist an extension operator that works simultaneously for all of the three vanishing subspaces? Can such an extension be defined by means of a  linear operator?
\end{problem}

For an $m$-jet $ \lbrace A_k \rbrace_{k=0}^m \in \dot{\op{J}}^{m,\omega}(E,V)$, let us denote
\begin{equation}\label{eq:vanishing:remainders}
R  \left( \lbrace A_k \rbrace_{k=0}^m, x,y \right):= \max_{0 \leq k \leq m} \frac{\|A_k(x)-\sum_{j=0}^{m-k} \frac{1}{j!}A_{k+j}(y)(x-y)^j\|}{|x-y|^{m-k}},\quad x,y\in E.
\end{equation}
We  show that full answers to the questions presented in Problem \ref{problem:ext} can be obtained by assuming that the jet $\lbrace A_k \rbrace_{k=0}^m \in  \dot{\op{VJ}}^{m,\omega}(E,V)$ vanishes in an appropriate manner, as described in the next definition.

\begin{dfn}\label{def:scales:jets}
Let $\omega:(0,\infty)\to (0,\infty)$ be a modulus of continuity, $E\subset \R^n$ an arbitrary set, $m \in \N \cup \lbrace 0 \rbrace$, and $V$ a normed space. We define the vanishing subspaces of jets
$$
	\dot{\op{VJ}}_{\op{small}}^{m,\omega}(E,V) := \Big\{   \lbrace A_k \rbrace_{k=0}^m \in  \dot{\op{J}}^{m,\omega}(E,V) \, : \, \lim_{\delta \to 0}  \sup_{ \substack{x\not=y\in E \\ |x-y| \leq \delta}} \frac{R  \left( \lbrace A_k \rbrace_{k=0}^m, x,y \right)}{\omega(|x-y|)}  = 0 \Big \},
$$
$$
		\dot{\op{VJ}}_{\op{large}}^{m,\omega}(E,V) := \Big\{  \lbrace A_k \rbrace_{k=0}^m \in \dot{\op{J}}^{m,\omega}(E,V) \, : \, \lim_{\delta \to \infty}  \sup_{ \substack{x\not=y\in E \\ |x-y| \geq \delta}} \frac{R  \left( \lbrace A_k \rbrace_{k=0}^m, x,y \right)}{\omega(|x-y|)} = 0 \Big \},
$$
$$
	\dot{\op{VJ}}_{\op{far}}^{m,\omega}(E,V) := \Big\{ \lbrace A_k \rbrace_{k=0}^m \in  \dot{\op{J}}^{m,\omega}(E,V)\, : \, \lim_{\delta \to \infty} \sup_{\substack{x\not=y\in E \\ \min(|x|,|y|) \geq \delta}} \frac{R  \left( \lbrace A_k \rbrace_{k=0}^m, x,y \right)}{\omega(|x-y|)}  = 0 \Big\}.
$$
And we define the intersection of the three,
$$
\dot{\op{VJ}}^{m,\omega}(E,V):= \dot{\op{VJ}}_{\op{small}}^{m,\omega}(E,V) \cap \dot{\op{VJ}}_{\op{large}}^{m,\omega}(E,V)  \cap \dot{\op{VJ}}_{\op{far}}^{m,\omega}(E,V) .
$$
\end{dfn}
 
 Before stating our main results, let us finally fix the class of moduli that we consider.
We always assume that $\omega$ is non-decreasing and satisfies 
\begin{equation}\label{eq:mod:intro}
\lim_{t \to 0} \omega(t)=0, \quad \text{and} \quad \frac{s}{\omega(s)} \leq C_\omega \frac{t}{\omega(t)}, \quad \text{whenever} \quad 0 < s \leq t < \infty,
\end{equation} 
 where $C_\omega>0$ is a fixed constant. We also assume the conditions  
 \begin{align}\label{eq:mod:regularity}
	\lim_{t \to 0} \frac{t}{\omega(t)}=0,
\end{align}
 \begin{align}\label{eq:mod:coer}
 \lim_{t\to \infty}\omega(t) = \infty,
\end{align} 
to deal with the small, and large and far scales, respectively.
Now, the following is our main result.
\begin{theorem}\label{thm:main:ext}
Let $\omega$ be a modulus of continuity satisfying \eqref{eq:mod:intro}, $E\subset \R^n$ an arbitrary set, $m \in \N \cup \lbrace 0 \rbrace$, and $V$ a Banach space. For an $m$-jet $\lbrace A_k \rbrace_{k=0}^m \in  \dot{\op{J}}^{m,\omega}(E,V)$, the following hold.

\begin{itemize}
\item Provided that \eqref{eq:mod:regularity} is satisfied, the jet $\lbrace A_k \rbrace_{k=0}^m$ admits an extension $F \in \VC^{m,\omega}_{\op{small}}(\R^n,V)$ if and only if $\lbrace A_k \rbrace_{k=0}^m \in \dot{\op{VJ}}_{\op{small}}^{m,\omega}(E,V).$
\item Provided that \eqref{eq:mod:coer} is satisfied, and $\Gamma\in \{ \op{large}, \op{far} \},$ the jet $\lbrace A_k \rbrace_{k=0}^m$ admits an extension $F \in \VC^{m,\omega}_{\Gamma}(E,V)$ if and only if $\lbrace A_k \rbrace_{k=0}^m \in \dot{\op{VJ}}_{\Gamma}^{m,\omega}(\R^n,V).$
	\item Provided that both \eqref{eq:mod:regularity}, \eqref{eq:mod:coer} hold, the jet $\lbrace A_k \rbrace_{k=0}^m$ admits an extension $F \in \VC^{m,\omega}(\R^n,V)$ if and only if $\lbrace A_k \rbrace_{k=0}^m \in \dot{\op{VC}}^{m,\omega}(E,V).$
\end{itemize} 
Moreover, when $E$ is closed,  the result holds when $V$ is merely a normed space, and these extensions can be defined via the linear Whitney extension operator. 
\end{theorem}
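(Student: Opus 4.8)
The plan is to prove (i) and (ii) — statement (iii) being the formal intersection of these — by checking, separately for each vanishing scale $\Gamma\in\{\op{small},\op{large},\op{far}\}$, two inclusions: \emph{(a)} restriction maps $\VC^{m,\omega}_\Gamma(\R^n,V)$ into $\dot{\op{VJ}}^{m,\omega}_\Gamma(E,V)$, and \emph{(b)} the classical \emph{linear} Whitney extension sends $\dot{\op{VJ}}^{m,\omega}_\Gamma(E,V)$ into $\VC^{m,\omega}_\Gamma(\R^n,V)$. Together with the classical facts that the Whitney extension $F$ of a jet $\{A_k\}\in\dot{\op{J}}^{m,\omega}(\overline E,V)$ always lies in $\dot C^{m,\omega}(\R^n,V)$ and satisfies $D^kF|_{\overline E}=A_k$, this yields the theorem together with the final clause. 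I would begin by recalling the construction over a closed set: take a Whitney decomposition $\{Q_i\}$ of $\R^n\setminus\overline E$ with $\op{diam}Q_i\asymp\dist(Q_i,\overline E)$, a subordinate $C^\infty$ partition of unity $\{\varphi_i\}$ with $\sum_i\varphi_i\equiv1$ off $\overline E$ and $\Norm{D^\ell\varphi_i}{\infty}\lesssim\op{diam}(Q_i)^{-\ell}$, points $p_i\in\overline E$ with $|p_i-c_{Q_i}|\lesssim\dist(Q_i,\overline E)$, and set $F:=A_0$ on $\overline E$ and $F:=\sum_i\varphi_iP_{p_i}$ off $\overline E$, where $P_p(z):=\sum_{k=0}^m\frac1{k!}A_k(p)(z-p)^k$.

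For \emph{(a)} I would use Taylor's formula: iterated integration along $[y,x]$ of $D^m$ of $D^kF$ minus its degree-$(m-k)$ Taylor polynomial at $y$ gives, for $x,y\in E$, $R(\{A_k\}_{k=0}^m,x,y)\le C_m\sup_{z\in[x,y]}\Norm{D^mF(z)-D^mF(y)}{}$. When $\Gamma=\op{small}$ this is immediate, since for $|x-y|\le\delta$ each $z\in[x,y]$ has $|z-y|\le\delta$ and $\Norm{D^mF(z)-D^mF(y)}{}/\omega(|x-y|)\le\Norm{D^mF(z)-D^mF(y)}{}/\omega(|z-y|)$ is controlled by the small-scale modulus of $D^mF$ at scale $\delta$. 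For $\Gamma\in\{\op{large},\op{far}\}$ I would split $[x,y]$ into the part within a slowly growing radius $g(\delta)$ of an endpoint and the rest — here $g(\delta)\to\infty$ with $\omega(g(\delta))/\omega(\delta)\to0$, which exists precisely thanks to \eqref{eq:mod:coer}. On the outer part the two arguments of $D^mF$ are at mutual distance $\ge g(\delta)$ (and, for $\op{far}$, of norm $\ge g(\delta)$, after observing that $|x-y|\gtrsim\delta$ forces the near-origin stretch of the segment to be $\gtrsim\delta$ away from the far endpoint), so the oscillation of $D^mF$ there is small; on the inner part one uses $\Norm{D^mF(z)-D^mF(y)}{}\le\Norm{F}{\dot C^{m,\omega}}\,\omega(g(\delta))$ together with $\omega(g(\delta))/\omega(\delta)\to0$. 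For $\op{far}$ this also uses the auxiliary fact that a far-vanishing $D^mF$ obeys $\Norm{D^mF(x_0)-D^mF(x)}{}/\omega(|x|)\to0$ as $|x|\to\infty$, which one proves by chaining $D^mF$ along a ray through dyadically spaced points and using $\sum_{j<k}\omega(2^j)\lesssim C_\omega\,\omega(2^k)$ (a consequence of \eqref{eq:mod:intro}) together with $\omega\to\infty$.

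The core is \emph{(b)}. I would isolate a master estimate of the form
\[
\frac{\Norm{D^mF(x)-D^mF(y)}{}}{\omega(|x-y|)}\ \lesssim_{n,m}\ \Lambda(x,y)\cdot\sup\Big\{\tfrac{R(\{A_k\},p,q)}{\omega(|p-q|)}\ :\ p,q\in\overline E,\ |p-x|,|q-x|\lesssim|x-y|+\dist(x,\overline E)\Big\},
\]
where $\Lambda(x,y)\le C_\omega$ always and $\Lambda(x,y)=\frac{|x-y|/\omega(|x-y|)}{\dist(x,\overline E)/\omega(\dist(x,\overline E))}$ when $x$ (hence $y$) is deep in $\R^n\setminus\overline E$; this comes from the usual three regimes — $x,y\in\overline E$ (the left side is then literally $\le R(\{A_k\},x,y)/\omega(|x-y|)$), $x,y$ near $\overline E$ relative to $|x-y|$ (triangle inequality through near-nearest points, and $\Norm{D^mF(x)-A_m(p)}{}\lesssim\max_iR(\{A_k\},p_i,p)$), and $x$ deep in the complement (Leibniz expansion of $D^m(\varphi_iP_{p_i})$, the cancellations $\sum_iD^\ell\varphi_i\equiv0$ for $\ell\ge1$, $|\varphi_i(x)-\varphi_i(y)|\lesssim|x-y|/\op{diam}Q_i$, and $\Norm{D^\ell(P_p-P_q)(z)}{}\lesssim R(\{A_k\},p,q)|z-p|^{m-\ell}$ for $|p-q|\le|z-p|$), with $\Lambda\le C_\omega$ extracted from \eqref{eq:mod:intro}. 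Granting this: for $\op{small}$, $|x-y|\le\delta$, split by whether $\dist(x,\overline E)\le r_0$ (local pairs then have $|p-q|\le Cr_0$, so the supremum is small for $r_0$ small, by small-vanishing) or $\dist(x,\overline E)>r_0$ (then $\Lambda\lesssim\tfrac{\delta/\omega(\delta)}{r_0/\omega(r_0)}\to0$ by \eqref{eq:mod:regularity}, against the trivial bound of the supremum by the jet seminorm); for $\op{large}$, $|x-y|\ge\delta$, the near regimes produce $|p-q|\gtrsim\delta$ (small by large-vanishing), and in the deep regime one splits by $|p_i-p_{i_0}|\ge g(\delta)$ versus $<g(\delta)$, the latter bringing in the factor $\omega(g(\delta))/\omega(\delta)\to0$; for $\op{far}$, $\min(|x|,|y|)\ge\delta$, one argues likewise but must also handle local pairs of small norm — these arise only when $\dist(x,\overline E)\gtrsim|x|$, and are dispatched by the jet version of the controlled-growth lemma ($R(\{A_k\},p_0,q)/\omega(|q|)\to0$ as $|q|\to\infty$ in $\overline E$, for far-vanishing jets) plus the fact that such configurations force $|x-y|$, hence $\omega(|x-y|)$, to be large. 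Finally, for arbitrary $E$ and Banach $V$ one first extends $\{A_k\}$ continuously to $\overline E$ — the jet inequality makes each $A_k$ uniformly continuous on bounded sets and completeness supplies the limits, while the jet and vanishing conditions carry over by density — and then applies the closed-$E$ case; if $E$ is already closed, the sum $\sum_i\varphi_iP_{p_i}$ and every estimate above are meaningful verbatim over a merely normed $V$.

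The hard part is the deep-complement analysis in \emph{(b)} for $\Gamma\in\{\op{large},\op{far}\}$: there the relevant points of $\overline E$ live at the scale $\dist(x,\overline E)$, which may be arbitrarily larger than $|x-y|$, so the vanishing of the jet does not bear on them directly and one must dovetail the modulus inequalities \eqref{eq:mod:intro}--\eqref{eq:mod:coer}, the choice of $g$, and — for $\op{far}$ — the controlled-growth lemma, whose proof (unlike the $m=0$ case, where it is a straight triangle inequality) cannot go through naive chaining because the approximate triangle inequality for the jet remainder $R$ carries a constant exceeding $1$; instead one uses that far-vanishing forces an averaged decay of the top jet components, which makes the Taylor-type remainders measured from a fixed point $o(\omega(\cdot))$ at infinity. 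Checking that this assembly of estimates is uniform in all the parameters is where the work is; the remainder is the bookkeeping of the classical Whitney proof.
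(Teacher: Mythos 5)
Your architecture coincides with the paper's: necessity via Taylor's theorem along $[x,y]$ with a splitting of the segment, sufficiency via the linear Whitney operator with the same three regimes ($x\in\R^n\setminus E$ and $y\in E$; both outside $E$ with $\dist([x,y],E)\le|x-y|$; both outside with $\dist([x,y],E)\ge|x-y|$, handled by the mean value inequality applied to $D^{m+1}F$), and the reduction to closed $E$ by completing the jet on $\overline E$. Most of the bookkeeping you describe is exactly what the paper carries out. There is, however, one load-bearing step where your argument breaks.

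The gap is your proof of the ``controlled-growth'' lemma for the far scale, i.e.\ the assertion that far-vanishing in the $\min(|x|,|y|)$ sense implies the corresponding decay when only $\max(|x|,|y|)$ is large (the paper's Lemma~\ref{lem:dist=far}; for $m=0$ this is your claim that $\|D^mF(x_0)-D^mF(x)\|/\omega(|x|)\to0$). You prove it by chaining along dyadically spaced points and invoking $\sum_{j<k}\omega(2^j)\lesssim C_\omega\,\omega(2^k)$ ``as a consequence of \eqref{eq:mod:intro}''. That inequality is false under the standing hypotheses: \eqref{eq:mod:intro} makes $t\mapsto\omega(t)/t$ quasi-\emph{decreasing}, which yields a \emph{lower} bound $\omega(2^j)\gtrsim 2^{j-k}\omega(2^k)$ on the early terms, not an upper bound on the sum. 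Concretely, $\omega(t)=\log(1+t)$ is nondecreasing, satisfies \eqref{eq:mod:intro} with $C_\omega=1$ and \eqref{eq:mod:coer}, yet $\sum_{j<k}\omega(2^j)\asymp k^2$ while $\omega(2^k)\asymp k$. Since this lemma is used in \emph{both} directions for $\Gamma=\op{far}$ (in necessity, to control $\|D^mF(z)-D^mF(y)\|$ for points $z\in[x,y]$ that pass near the origin; in sufficiency, to know that your supremum over pairs with $\max(|p|,|q|)\ge t$ tends to $0$), the far case of the theorem does not close as written. The repair is to abandon chaining over scales entirely: fix a single auxiliary point $z\in E\setminus B(0,M)$, write $R(\mathcal{A},x,y)\lesssim R(\mathcal{A},x,z)+C(|z|,\mathcal{A})$ via the polynomial identities, note $|x-z|\asymp|x-y|$ when $|x|\ge K\gg\max(M,|z|)\ge|y|$, and absorb the constant $C(|z|,\mathcal{A})$ into $\varepsilon\,\omega(K)\lesssim\varepsilon\,\omega(|x-y|)$ using \eqref{eq:mod:coer}. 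A secondary slip in the same vein: in the deep-complement far case you say the configuration with both relevant points of $\overline E$ of norm $\le M$ ``forces $|x-y|$ to be large''; it does not ($x,y$ can be arbitrarily close while both lie deep in $\R^n\setminus E$ far from the origin). What is forced to be large is $\dist(x,\overline E)\ge K-M$, and one should absorb $\omega(M)\le\varepsilon\,\omega(K)\lesssim\varepsilon\,\omega(\dist(x,\overline E))$ and then use $|x-y|\le\dist(x,\overline E)$ together with \eqref{eq:mod:intro}.
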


Specifying to the particular case $m=0,$ we obtain the following.
\begin{corollary}\label{cor1}
Let $\omega$ be a modulus of continuity satisfying \eqref{eq:mod:intro}, $E\subset \R^n$ be an arbitrary set  and $V$ a Banach space.
\begin{itemize}
\item Provided that \eqref{eq:mod:regularity} is satisfied, $f\in \dot{C}^{0,\omega}(E,V)$ admits an extension $F \in \VC^{0,\omega}_{\op{small}}(\R^n,V)$ if and only if $f \in \dot{\op{VC}}_{\op{small}}^{0,\omega}(E,V).$
\item Provided that \eqref{eq:mod:coer} is satisfied,  and $\Gamma\in \{ \op{large}, \op{far} \},$ $f\in \dot{C}^{0,\omega}(E,V)$ admits and extension $F \in \VC^{0,\omega}_{\op{\Gamma}}(\R^n,V)$ if and only if $f \in \dot{\op{VC}}_{\Gamma}^{0,\omega}(E,V).$
	\item Provided that both \eqref{eq:mod:regularity}, \eqref{eq:mod:coer} hold, $f\in \dot{C}^{0,\omega}(E,V)$ admits an extension $F \in \VC^{0,\omega}(\R^n,V)$ if and only if $f \in \dot{\op{VC}}^{0,\omega}(E,V).$
\end{itemize} 

\end{corollary}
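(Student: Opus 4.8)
Corollary~\ref{cor1} is the case $m=0$ of Theorem~\ref{thm:main:ext} (via Remark~\ref{rem:m=0}), so I describe how I would prove the theorem. The plan is to handle the two implications of each equivalence separately and to reduce at the outset to closed $E$: the jet conditions force $\{A_k\}_{k=0}^m$ to be uniformly continuous at small scales, so it extends to $\overline{E}$ with the same seminorm and with all three vanishing properties preserved (completeness of $V$ is used here to pass to limits), and an extension from $\overline{E}$ restricts back to $E$. In every case the nontrivial implication is the construction of $F$, which will always be the image of the jet under the classical linear Whitney extension operator recalled in Section~\ref{section:sufficienty}; the theorem asserts that this one operator already lands in the required vanishing subspace.

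\emph{Necessity.} Suppose $F\in\VC^{m,\omega}_\Gamma(\R^n,V)$ restricts to $\{A_k\}_{k=0}^m$. Applying the integral form of Taylor's theorem to $D^kF$ with base point $y$ yields, for all $x,y\in E$,
\[
R\big(\{A_k\}_{k=0}^m,x,y\big)\ \le\ \sup_{z\in[x,y]}\|D^mF(z)-D^mF(y)\|,
\]
so it is enough to control the oscillation of $D^mF$ on the segment $[x,y]$. For $\Gamma=\op{small}$ this is immediate: $z\in[x,y]$ gives $|z-y|\le|x-y|$ and $\omega(|z-y|)\le\omega(|x-y|)$, so dividing by $\omega(|x-y|)$ bounds the quotient by the defining supremum of $\VC^{m,\omega}_{\op{small}}$ at scale $|x-y|$ — only \eqref{eq:mod:intro} is used. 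For $\Gamma=\op{large}$ one inserts $D^mF(x)$ and uses $\max(|z-x|,|z-y|)\ge\tfrac12|x-y|$. The case $\Gamma=\op{far}$ is the delicate one: if $[x,y]$ misses $B(0,\delta/2)$ the preceding reasoning applies with the far seminorm; otherwise the endpoints, of norm $\ge\delta$, force $|x-y|\ge\sqrt3\,\delta$, and splitting $D^mF(z)-D^mF(y)$ through the point where $[x,y]$ crosses $\partial B(0,\delta/2)$ leaves a single near-origin term of size $\lesssim\|\{A_k\}_{k=0}^m\|_{\dot{\op{J}}^{m,\omega}(E,V)}\,\omega(\delta)$; this fixed quantity is swallowed on dividing by $\omega(|x-y|)\ge\omega(\delta)\to\infty$, which is where \eqref{eq:mod:coer} enters.

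\emph{Sufficiency.} Let $\{A_k\}_{k=0}^m\in\dot{\op{VJ}}^{m,\omega}_\Gamma(E,V)$ with $E$ closed, and let $F$ be its Whitney extension: with a Whitney decomposition $\{Q_j\}$ of $\R^n\setminus E$, a subordinate partition of unity $\{\varphi_j\}$, and points $p_j\in E$ with $\dist(p_j,Q_j)\approx\diam Q_j$, set $F=A_0$ on $E$ and $F=\sum_j\varphi_j P_j$ off $E$, where $P_j$ is the order-$m$ Taylor polynomial of the jet at $p_j$. The classical result already provides $F\in C^m(\R^n,V)$, $D^kF|_E=A_k$, and $F\in\dot{C}^{m,\omega}(\R^n,V)$. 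What must be added is a \emph{localized} form of the classical estimate: there is $C=C(n,m,C_\omega)$ so that, writing $\rho=|x-y|$, $d_x=\dist(x,E)$, $d_y=\dist(y,E)$, one has $\|D^mF(x)-D^mF(y)\|\le C\,\omega(\rho)\,\Omega(x,y)$, where $\Omega(x,y)$ is the supremum of $R(\{A_k\}_{k=0}^m,p,q)/\omega(|p-q|)$ over pairs $p\ne q\in E$ with $p$ and $q$ both in $B(x,C(\rho+d_x))\cup B(y,C(\rho+d_y))$. I would prove this by the usual four-case split: both $x,y\in E$ (then $(x,y)$ is itself an admissible pair); exactly one of $x,y$ in $E$; both outside $E$ with $\rho\gtrsim\min(d_x,d_y)$ (reduce to the earlier cases via the distinguished points of cubes near $x$ and near $y$, which then lie within $\lesssim\rho$ of one another); and both outside $E$ with $\rho\ll\min(d_x,d_y)$, where one estimates $\|D^{m+1}F\|$ on $[x,y]$ by $\lesssim d_x^{-1}\max\{R(\{A_k\}_{k=0}^m,p,q):p,q\ \text{admissible}\}$ and converts the resulting $\rho\,\omega(d_x)/d_x$ into $\lesssim\omega(\rho)$ using the quasi-monotonicity of $t\mapsto t/\omega(t)$ in \eqref{eq:mod:intro}.

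\emph{Conclusion, and the main difficulty.} Granting the localized estimate one finishes scale by scale. When $(x,y)$ ranges in the $\Gamma$-regime \emph{and} the admissible balls sit at the same $\Gamma$-scale (for instance $d_x,d_y\lesssim\rho$, or, in the far case, all admissible balls lie far from the origin), the pairs $(p,q)$ entering $\Omega(x,y)$ are at $\Gamma$-scale and $\Omega(x,y)\to0$ directly from $\{A_k\}_{k=0}^m\in\dot{\op{VJ}}^{m,\omega}_\Gamma(E,V)$. The remaining configurations are exactly those in which $x$ or $y$ is far from $E$, so the admissible scale overshoots that of $(x,y)$; these are dispatched by the structural hypotheses. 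For $\Gamma=\op{small}$ with $\rho\ll d_x$, split on a fixed threshold $\tau$: for $d_x\le\tau$ the admissible pairs sit at scale $\lesssim\tau$ and $\Omega(x,y)$ is genuinely small, while for $d_x>\tau$ one bounds $\Omega(x,y)$ by the jet seminorm but gains the factor $\rho/\omega(\rho)\to0$ from \eqref{eq:mod:regularity} (with $\omega(d_x)/d_x$ bounded via \eqref{eq:mod:intro} since $d_x>\tau$). For $\Gamma\in\{\op{large},\op{far}\}$, split the admissible pairs at a fixed $\Gamma$-scale $s_0$: those already at $\Gamma$-scale $\ge s_0$ contribute at most the $\dot{\op{VJ}}^{m,\omega}_\Gamma$-supremum at $s_0$, and the rest contribute at most $\lesssim\|\{A_k\}_{k=0}^m\|_{\dot{\op{J}}^{m,\omega}(E,V)}\,\omega(s_0)$, a fixed quantity that, after division by $\omega(\rho)$ — or, in the ``far with $\rho$ bounded'' regime where $\rho/d_x\lesssim\omega(\rho)/\omega(d_x)$ by \eqref{eq:mod:intro}, by $\omega(d_x)$ — tends to $0$ because $\omega\to\infty$ at infinity by \eqref{eq:mod:coer}; one first fixes $s_0$ (or $\tau$) large and only then sends $\delta$ to its limit. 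I expect the main obstacle to be establishing the localized estimate with the correct admissible-pair set — above all the subcase of $x,y$ far from $E$ but mutually close, where the scale conversion through \eqref{eq:mod:intro} is the whole point — and then carrying out the parameter bookkeeping in the correct order.
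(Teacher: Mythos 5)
Your route is the paper's. Corollary \ref{cor1} is read off from Theorem \ref{thm:main:ext} with $m=0$ via Remark \ref{rem:m=0}, and your proof of the theorem matches Sections 2--3: the reduction to closed $E$ by continuous extension of the jet to $\overline{E}$ (using completeness of $V$), necessity via Taylor's theorem on the segment $[x,y]$, and sufficiency via the Whitney operator with pointwise bounds on $D^mF(x)-A_m(y)$ and on $D^{m+1}F$ in terms of the remainders $R(\mathcal{A},\cdot,\cdot)$ at the footpoints $p_Q$ and the nearest point $\xi$. Your ``localized estimate'' is a repackaging of the paper's Lemma \ref{lem:derivatives:higherorder} together with its three-case split on the position of $x,y$ relative to $E$, and your conversion $\rho\,\omega(d)/d\lesssim\omega(\rho)$ is exactly how the paper uses \eqref{eq:mod:ext1} in the case $\dist([x,y],E)\geq|x-y|$. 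Your necessity arguments for the large and far scales are harmless variants (inserting $D^mF(x)$, resp.\ splitting the segment at $\partial B(0,\delta/2)$) of the paper's.

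There is, however, one genuine gap, in the sufficiency for $\Gamma=\op{far}$. The hypothesis $\mathcal{A}\in\dot{\op{VJ}}^{m,\omega}_{\op{far}}(E,V)$ controls $R(\mathcal{A},p,q)/\omega(|p-q|)$ only for pairs with $\min(|p|,|q|)$ large. In your final bookkeeping you split the admissible pairs at a threshold $s_0$ and assert that the pairs not at far-scale $\geq s_0$ contribute at most $\lesssim\|\mathcal{A}\|_{\dot{\op{J}}^{m,\omega}(E,V)}\,\omega(s_0)$. For the large scale this is correct, since there ``not at scale $\geq s_0$'' means $|p-q|<s_0$. For the far scale it means $\min(|p|,|q|)<s_0$, and such a pair can have $|p-q|$ arbitrarily large: with $|x|,|y|\geq K$ and $\rho=|x-y|$ huge, the admissible pair $(y,p_Q)$ from the case $y\in E$, $x\in Q^*$ has $|y|\geq K$ while $p_Q$ may sit near the origin, so the only a priori bound is $R(\mathcal{A},y,p_Q)\leq\|\mathcal{A}\|_{\dot{\op{J}}^{m,\omega}(E,V)}\,\omega(|y-p_Q|)\lesssim\omega(\rho)$, which yields nothing small after dividing by $\omega(\rho)$. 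Note also that this configuration occurs with both $x$ and $y$ close to $E$, so it is not covered by your dichotomy ``the remaining configurations are exactly those in which $x$ or $y$ is far from $E$.'' The paper resolves precisely this point with Lemma \ref{lem:dist=far}: under \eqref{eq:mod:coer} the far-vanishing condition with $\min$ automatically upgrades to the same condition with $\max$, which is why the paper's function $\op{D}(t)$ is defined with $\max(|u|,|v|)\geq t$ and still tends to $0$; that is what absorbs the mixed pairs above. This upgrade is not formal --- its proof routes the remainder through an auxiliary point of $E$ far from the origin and uses the local boundedness of the jet (Remark \ref{remark:boundedness}) together with \eqref{eq:mod:coer}. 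You need this lemma, or an equivalent substitute, to close the far case.
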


Combining Corollary \ref{cor1} with the approximations of globally defined functions \cite[Corollary 3.14.]{MudOik24}, we deduce the following.
\begin{corollary} Let $\omega$ satisfy \eqref{eq:mod:intro}, \eqref{eq:mod:regularity} and \eqref{eq:mod:coer}, let $E\subset \R^n$ be closed and $V$ be a Banach space.
	\begin{itemize}
		\item 	If $f\in \dot {\op{VC}}^{0,\omega}_{\op{small}}(E,V)$ and $\varepsilon>0,$ there exists $G\in \Lip(\R^n,V)\cap C^{\infty}(\R^n;V)\cap \dot{C}^{0,\omega}(\R^n,V)$ such that  
		$$
		\| f-G_{|E}\|_{\dot{C}^{0,\omega}(E,V)}<\varepsilon,\qquad \| G\|_{\dot{C}^{0,\omega}(\R^n,V)}\lesssim  \| f \|_{\dot{C}^{0,\omega}(E,V)}.
		$$
		\item  If  $f\in \dot {\op{VC}}^{0,\omega}(E,V)$ and $\varepsilon>0,$ there exists $G\in C^{\infty}_c(\R^n;V)\cap \dot{C}^{0,\omega}(\R^n,V)$ such that  
		$$
		\| f-G_{|E}\|_{\dot{C}^{0,\omega}(E,V)}<\varepsilon,\qquad \| G\|_{\dot{C}^{0,\omega}(\R^n,V)}\lesssim  \| f \|_{\dot{C}^{0,\omega}(E,V)}.
		$$
	\end{itemize} 
\end{corollary}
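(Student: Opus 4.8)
The plan is to chain the extension result of Corollary \ref{cor1} with the global approximation statement \cite[Corollary 3.14.]{MudOik24}. First I would take $f\in \dot{\op{VC}}^{0,\omega}_{\op{small}}(E,V)$ and apply Corollary \ref{cor1}(i): this produces an extension $F\in \VC^{0,\omega}_{\op{small}}(\R^n,V)$ with $F_{|E}=f$. Since the extension in Theorem \ref{thm:main:ext} is realized by the \emph{linear} Whitney extension operator, which is bounded, we also have $\|F\|_{\dot{C}^{0,\omega}(\R^n,V)}\lesssim \|f\|_{\dot{C}^{0,\omega}(E,V)}$ (for $m=0$ the jet seminorm is precisely the H\"older seminorm on $E$, cf.\ Remark \ref{rem:m=0}). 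Next I would invoke \cite[Corollary 3.14.]{MudOik24}, which asserts that any member of the \emph{globally defined} little H\"older space $\VC^{0,\omega}_{\op{small}}(\R^n,V)$ can be approximated, within $\varepsilon$ in the $\dot{C}^{0,\omega}(\R^n,V)$ seminorm, by some $G\in C^{\infty}(\R^n;V)\cap \dot{C}^{0,\omega}(\R^n,V)$ whose seminorm is controlled by that of $F$. Finally, since restriction to $E$ does not increase the H\"older seminorm,
\[
\|f-G_{|E}\|_{\dot{C}^{0,\omega}(E,V)}\le \|F-G\|_{\dot{C}^{0,\omega}(\R^n,V)}<\varepsilon,
\]
while $\|G\|_{\dot{C}^{0,\omega}(\R^n,V)}\lesssim \|F\|_{\dot{C}^{0,\omega}(\R^n,V)}\lesssim \|f\|_{\dot{C}^{0,\omega}(E,V)}$, which is the first assertion.

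For the second assertion I would argue identically, this time starting from $f\in \dot{\op{VC}}^{0,\omega}(E,V)$ and using part (iii) of Corollary \ref{cor1} to obtain an extension $F\in \VC^{0,\omega}(\R^n,V)$ lying in the intersection of all three vanishing scales; the corresponding case of \cite[Corollary 3.14.]{MudOik24} then furnishes a compactly supported $G\in C^{\infty}_c(\R^n;V)\cap \dot{C}^{0,\omega}(\R^n,V)$ satisfying the same two estimates, and the conclusion follows exactly as before. The hypothesis ``$E$ closed or $V$ Banach'' is precisely what is needed to apply Corollary \ref{cor1}: the Banach case is the one stated there directly, and the closed case is covered by the last sentence of Theorem \ref{thm:main:ext}.

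There is essentially no obstacle here, since all the substance lives in Theorem \ref{thm:main:ext} (hence Corollary \ref{cor1}) and in the already established \cite[Corollary 3.14.]{MudOik24}. The one point deserving a line of care is the bookkeeping of implicit constants: one should note that the constant hidden in $\lesssim$ depends only on $n$ and $\omega$ (through $C_\omega$), and not on $f$, $E$, or $\varepsilon$, which is immediate because both the Whitney extension operator and the smoothing operator of \cite{MudOik24} are bounded linear maps with constants of exactly this type.
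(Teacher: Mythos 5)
Your proposal is correct and follows exactly the route the paper intends: the paper derives this corollary in one line by combining Corollary \ref{cor1} (Whitney extension preserving the vanishing scales, with the seminorm bound coming from linearity and boundedness of the operator) with the global approximation result \cite[Corollary 3.14.]{MudOik24}, and then restricting back to $E$, which cannot increase the H\"older seminorm. Your handling of the hypothesis ``$E$ closed or $V$ Banach'' and of the uniformity of the implicit constants also matches the paper's setup.
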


\subsection*{Basic notation}
Above $C^{\infty}_c$ stands for smooth and compactly supported functions and $\Lip$ stands for Lipschitz continuous functions.
We denote $A \lesssim B$, if $A \leq C B$ for some constant $C>0$ depending only
on parameters like integration exponents or the dimension of $\R^n$ that are not important to track.
Furthermore, we set $A \sim B$, if $A \lesssim B$ and $B \lesssim  A$.
Subscripts or variables on constants and quantifiers, such as, $C_{a},C(a)$ and
$\lesssim_{a}$, signify their dependence on those subscripts.

\section{Necessity of Theorem \ref{thm:main:ext}}

In this section, we give the proof of the ``only if'' parts of Theorem \ref{thm:main:ext}. We begin with the following elementary observation.

\begin{remark}\label{remark:boundedness}
{\em If $\mathcal{A} \in \dot{\op{J}}^{m,\omega}(E,V),$ then for every $r>1$, one has
$$
\max_{0 \leq k \leq m} \sup_{z\in E, \, |z| \leq r}  \|A_k(z)\| \leq C(r,m, \omega, \mathcal{A}, E)< \infty .
$$
}
\end{remark}
\begin{proof}
Write, for each $z\in E \cap B(0,r)$ and $z_0\in E$, 
\begin{align*}
\|A_k(z)\| & \leq \Big \|A_k(z)   -\sum_{j=0}^{m-k} \frac{1}{j!}A_{k+j}(z_0)(z-z_0)^j \Big \| + \| \sum_{j=0}^{m-k} \frac{1}{j!}A_{k+j}(z_0)(z-z_0)^j \Big \| \\
& \leq \| \mathcal{A} \|_{\dot{\op{J}}^{m,\omega}(E,V)} |z-z_0|^{m-k} \omega(|z-z_0|) + C(\mathcal{A},z_0,m) \max_{0 \leq j \leq m} |z-z_0|^j \\
& \lesssim (r+|z_0|)^{m-k} \left( \omega(r+|z_0|) + C(\mathcal{A},z_0,m) \right).
\end{align*}
\end{proof}

The following Lemma \ref{lem:dist=far} is extremely useful both in this and the next section. It shows that the minimum $\min(|x|,|y|) \geq \delta$ defining the vanishing condition $\dot{\op{VJ}}_{\op{far}}^{m,\omega}(E,V)$ can be replaced with the maximum $\max(|x|,|y|) \geq \delta.$ 
We also note that the proof in the case $m=0$ is simpler due to the symmetry of the corresponding $R(\mathcal{A},x,y).$ For a proof in this special case with $E = \R^n,$ we direct the interested reader to \cite[Lemma 2.2.]{MudOik24}.
\begin{lemma}\label{lem:dist=far} Let $\omega$ satisfy \eqref{eq:mod:coer}, $E\subset\R^n$ be arbitrary, $m\in \N \cup \lbrace 0 \rbrace,$ and $V$ a normed space. Then, we have 
	\begin{align*}
			\dot{\op{VJ}}_{\op{far}}^{m,\omega}(E,V)   =  \Big\{ \mathcal{A}  \in  \dot{\op{J}}^{m,\omega}(E,V) \, : \, \lim_{\delta \to \infty} \sup_{ \substack{x\not=y\in E \\ \max(|x|,|y|) \geq \delta}} \frac{R  \left( \mathcal{A}, x,y \right) }{\omega(\aabs{x-y})}    = 0 \Big\}.
	\end{align*}
\end{lemma}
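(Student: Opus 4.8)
The plan is to prove the two inclusions separately; the nontrivial direction is showing that a jet in the larger (``$\max$-based'') class automatically lies in $\dot{\op{VJ}}_{\op{far}}^{m,\omega}(E,V)$, and this is exactly where the coercivity assumption \eqref{eq:mod:coer} enters. First I would fix notation: write $\mathcal{A} = \lbrace A_k \rbrace_{k=0}^m$ and abbreviate the quotient $Q(x,y) := R(\mathcal{A},x,y)/\omega(\aabs{x-y})$. The inclusion $\subseteq$ is immediate, since $\min(|x|,|y|) \geq \delta$ forces $\max(|x|,|y|) \geq \delta$, so the supremum defining the $\max$-version is taken over a larger set and hence dominates the supremum in $\dot{\op{VJ}}_{\op{far}}^{m,\omega}$; thus the $\max$-limit being $0$ implies the $\min$-limit is $0$ — wait, that's the wrong direction, so in fact $\subseteq$ requires a small argument too. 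Let me restructure: the easy inclusion is $\supseteq$ of the displayed set inside $\dot{\op{VJ}}_{\op{far}}^{m,\omega}$ — no: $\dot{\op{VJ}}_{\op{far}}^{m,\omega}$ is defined with $\min$, the displayed set with $\max$, and since $\{\max \geq \delta\} \supseteq \{\min \geq \delta\}$, the $\max$-condition is \emph{stronger}, so the displayed set is contained in $\dot{\op{VJ}}_{\op{far}}^{m,\omega}$. That inclusion is trivial. The content is the reverse: every $\mathcal{A} \in \dot{\op{VJ}}_{\op{far}}^{m,\omega}(E,V)$ satisfies the stronger $\max$-vanishing.

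So the main step is: given $\mathcal{A} \in \dot{\op{VJ}}_{\op{far}}^{m,\omega}(E,V)$, show $\sup_{x \neq y \in E,\ \max(|x|,|y|) \geq \delta} Q(x,y) \to 0$ as $\delta \to \infty$. Fix $\varepsilon > 0$. By the definition of $\dot{\op{VJ}}_{\op{far}}^{m,\omega}$, choose $\delta_0$ with $\sup_{\min(|x|,|y|) \geq \delta_0} Q(x,y) < \varepsilon$. Now consider a pair $x,y \in E$ with $\max(|x|,|y|) \geq \delta$ for $\delta$ large; the only troublesome case is when one of the points, say $y$, is small, i.e. $|y| < \delta_0$ while $|x| \geq \delta$. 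Then $|x - y| \geq |x| - |y| \geq \delta - \delta_0$, which is large, so we are in the ``large'' regime and must estimate $Q(x,y)$ using a different mechanism: boundedness of $\mathcal{A}$ on the bounded region (Remark \ref{remark:boundedness}) combined with the growth of $\omega$. Concretely, $R(\mathcal{A},x,y) = \max_k \|A_k(x) - \sum_j \tfrac{1}{j!} A_{k+j}(y)(x-y)^j\| / |x-y|^{m-k}$; using the jet seminorm bound one has $\|A_k(x) - \sum_j \tfrac{1}{j!}A_{k+j}(x)(x-y)^j / \ldots\|$ — actually the cleanest route is to compare $A_k(x)$ to the Taylor polynomial centered at $y$ via the triangle inequality through an intermediate point, or simply bound $R(\mathcal{A},x,y)$ directly. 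One writes $\|A_k(x) - \sum_{j=0}^{m-k}\tfrac{1}{j!}A_{k+j}(y)(x-y)^j\| \leq \|A_k(x)\| + \sum_j \tfrac{1}{j!}\|A_{k+j}(y)\| |x-y|^j$. Here $\|A_{k+j}(y)\|$ is bounded by a constant depending only on $\delta_0$ (since $|y| < \delta_0$), by Remark \ref{remark:boundedness}, and $\|A_k(x)\|$ can be controlled by expanding around a fixed basepoint $x_0 \in E$: $\|A_k(x)\| \lesssim \|\mathcal{A}\|_{\dot{\op{J}}^{m,\omega}} |x-x_0|^{m-k}\omega(|x-x_0|) + C(\mathcal{A},x_0)(1+|x-x_0|^m)$. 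Dividing by $|x-y|^{m-k}\omega(|x-y|)$ and using $|x-y| \gtrsim |x|$, $|x-y| \gtrsim \delta - \delta_0$, together with the doubling-type condition \eqref{eq:mod:intro} (which gives $\omega(|x-x_0|)/\omega(|x-y|) \lesssim$ a constant since $|x - x_0|$ and $|x-y|$ are comparable up to the fixed bounded quantities $|x_0|, \delta_0$) and the coercivity \eqref{eq:mod:coer} ($\omega(|x-y|) \to \infty$), one sees that $Q(x,y) \to 0$ uniformly over such pairs as $\delta \to \infty$.

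The argument for the case where \emph{both} $|x|$ and $|y|$ exceed $\del$ is covered by the choice of $\delta_0$ (take $\delta \geq \delta_0$), and the case where $\max(|x|,|y|) \geq \delta$ but this is achieved with $|y|\geq \delta_0$ for the smaller coordinate reduces to the $\min \geq \delta_0$ case. So the proof decomposes the index set $\{\max(|x|,|y|)\geq\delta\}$ into $\{\min(|x|,|y|) \geq \delta_0\}$ (handled by hypothesis) and the ``mixed'' set where one point is in the fixed ball $B(0,\delta_0)$ and the other is far (handled by the Remark plus growth of $\omega$). I expect the only real obstacle is making the mixed-case estimate clean: one must carefully track that $\omega(|x-y|)^{-1} \to 0$ beats the polynomial factors $|x-y|^{j}/|x-y|^{m-k} = |x-y|^{j - m + k}$, and since $j$ ranges up to $m-k$ the worst exponent is $0$ (when $j = m-k$), so the polynomial factor is bounded and $\omega(|x-y|)^{-1}\to 0$ finishes it; the term involving $\|A_k(x)\|$ with its $\omega(|x-x_0|)$ growth needs the quasi-monotonicity \eqref{eq:mod:intro} to cancel against $\omega(|x-y|)$ in the denominator, leaving a term $\lesssim \varepsilon$ plus a term $\lesssim C/\omega(\delta - \delta_0) \to 0$. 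Assembling these, for $\delta$ large enough the full supremum is $\lesssim \varepsilon$, proving the reverse inclusion and hence equality.
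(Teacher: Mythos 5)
Your setup is right: the inclusion of the displayed ($\max$-based) set into $\dot{\op{VJ}}_{\op{far}}^{m,\omega}(E,V)$ is trivial, and the content is the reverse inclusion, which you correctly reduce to the mixed pairs where one point lies in a fixed ball $B(0,\delta_0)$ and the other has modulus at least $\delta\to\infty$. But the estimate you propose for these mixed pairs has a genuine gap. Bounding the numerator of $R(\mathcal{A},x,y)$ by $\|A_k(x)\|+\sum_j\frac1{j!}\|A_{k+j}(y)\|\,|x-y|^j$ discards all the cancellation encoded in the jet compatibility condition, and the surviving term $\|A_k(x)\|/\big(|x-y|^{m-k}\omega(|x-y|)\big)$ does \emph{not} tend to zero: your own bound $\|A_k(x)\|\lesssim\|\mathcal{A}\|_{\dot{\op{J}}^{m,\omega}(E,V)}\,|x-x_0|^{m-k}\omega(|x-x_0|)+C(\mathcal{A},x_0)(1+|x-x_0|^m)$ with a fixed basepoint $x_0$, combined with $|x-x_0|\approx|x-y|$, leaves after division a quantity comparable to $\|\mathcal{A}\|_{\dot{\op{J}}^{m,\omega}(E,V)}$ --- a fixed constant, not something $\lesssim\varepsilon$, and not something killed by $\omega(|x-y|)\to\infty$. (Already for $m=0$, $k=0$ this is $\|A_0(x)\|/\omega(|x-y|)\approx\|\mathcal{A}\|$.) So the argument as written only shows that the supremum over mixed pairs stays bounded, which is just membership in $\dot{\op{J}}^{m,\omega}(E,V)$.

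The missing idea, and what the paper's proof supplies, is an \emph{auxiliary far point}: since the far-vanishing condition is only nonvacuous when $E$ is unbounded, one may fix $z\in E$ with $|z|>M$ (where $M$ comes from the $\min$-hypothesis) and $|z|$ a fixed finite quantity, and compare the Taylor polynomial of the jet centered at $y$ with the one centered at $z$ rather than estimating $\|A_k(x)\|$ in isolation. The remainder between $x$ and $z$ is then $\leq\varepsilon\,|x-z|^{m-k}\omega(|x-z|)$ because \emph{both} $|x|,|z|>M$, with $|x-z|\approx|x-y|$; the difference of the two polynomials is controlled by $C(|z|,\mathcal{A})\,|x-y|^{m-k}$ via Remark \ref{remark:boundedness}, and this constant is absorbed by taking $K$ so large that $C(|z|,\mathcal{A})\leq\varepsilon\,\omega(K)$, which is exactly where \eqref{eq:mod:coer} enters. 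A second point you gloss over: $R(\mathcal{A},x,y)$ is not symmetric in $x$ and $y$ for $m\geq1$ (it is the remainder of the jet at $x$ expanded about $y$), so the case $|y|\geq K$, $|x|\leq M$ requires a separate argument; the paper handles it by re-expanding each coefficient $A_{k+j}(y)$ around $x$ and invoking the first-case estimates with the roles swapped. Your proposal treats the two points interchangeably and so does not address this.
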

\begin{proof}
 First notice that the vanishing condition of $	\dot{\op{VJ}}_{\op{far}}^{m,\omega}(E,V)$ is \textbf{not} vacuous if and only if $E$ is unbounded, and so $E$ is assumed to be unbounded in this proof. Let $\mathcal{A}=\lbrace A_k \rbrace_{k=0}^m  \in  \dot{\op{VJ}}^{m,\omega}(E,V)$ and $\varepsilon>0.$ Let $M = M(\varepsilon)$ be such that if $u,v\in E$ and $| u|,|v|>M,$ then 
\begin{equation}\label{eq:lem:far=dist:choiceofM}
R  \left( \mathcal{A}, u,v \right)  <\varepsilon\omega(|u-v|).
\end{equation} 
Since $E$ is unbounded, we can fix a point $z_0 \in  E  \setminus B(0,M)$. Now, by Remark \ref{remark:boundedness} and the assumption \eqref{eq:mod:coer}, we can select $K>0$ large enough (and depending on $\varepsilon,M, \omega, E ,\mathcal{A}$) so that 
\begin{equation}\label{eq:lem:far=dist:choiceofK}
K \geq 4 \max \left(     M, |z_0| ,1 \right) \quad \text{and} \quad \max_{0 \leq j \leq m} \sup_{z\in E, \, |z| \leq |z_0|}  \|A_j(z)\| \leq \varepsilon \omega(K).
\end{equation} 
Because $|z_0| \geq M,$ the last bound is also true when $|z_0|$ is replaced by $M.$ (Morally speaking, we take $K\gg M + |z_0|$ to be much bigger than both $M$ and $|z_0|.$)

Now we consider arbitrary $x,y\in E$ with the lower bound $\max ( |x|,| y| ) \geq  K.$ Notice that, for our purposes, we can also assume that $M \geq \min(|x|,|y|),$ as otherwise the desired estimate is true already by \eqref{eq:lem:far=dist:choiceofM}. So, for future reference, we are assuming that:
\begin{equation}\label{eq:lem:far=dist}
\max(|x|, |y|) \geq K \quad \text{and} \quad M \geq \min(|x|,|y|).
\end{equation}
Notice that $R(\mathcal{A},x,y)$ is not symmetric with respect to $(x,y),$ and so we must study two non-symmetric cases.

We start by checking the case $|x| \geq K$ and $|y|\leq M$ first.  Using the first inequality of \eqref{eq:lem:far=dist:choiceofK}, the fact $|z_0| \geq M$, and the triangle inequality, we see that
\begin{equation}\label{eq:lem:far=dist:relations}
|y-z_0| \leq |x-y|  \quad \text{and} \quad C^{-1} |x-y| \leq |x-z_0| \leq C|x-y|;
\end{equation}
for some absolute constant $C>0.$ Now, using that $x,z_0\in E \setminus B(0,M)$, the relations \eqref{eq:lem:far=dist:relations} and the choice of $K$ in \eqref{eq:lem:far=dist:choiceofK}, we have, for every $0 \leq k \leq m,$
	\begin{align}\label{eq:p}
		\Big \|A_k(x) & -\sum_{j=0}^{m-k} \frac{1}{j!}A_{k+j}(y)(x-y)^j \Big \|  \nonumber \\
		& \leq \Big \|A_k(x)-\sum_{j=0}^{m-k} \frac{1}{j!}A_{k+j}(z_0)(x-z_0)^j \Big \|+ \Big \|  \sum_{j=0}^{m-k} \frac{1}{j!}A_{k+j}(z_0)\left( (y-z_0)^j-(x-z_0)^j \right) \Big \| \nonumber \\ 
		 & \leq \varepsilon  |x-z_0|^{m-k} \, \omega(|x-z_0|) + C(m) \max_{0 \leq j \leq m-k} \sup_{z\in E, \, |z| \leq |z_0|} \| A_{k+j}(z) \| \left( |y-z_0|^j + |x-z_0|^j \right), \nonumber \\
		 & \lesssim_{C_\omega,m,E,\|\mathcal{A}\|_{\dot{\op{VJ}}^{m,\omega}(E,V)}} \varepsilon |x-y|^{m-k} \omega(|x-y|) + \varepsilon \omega(K) \max_{0 \leq j \leq m-k}  |x-y|^j .
	\end{align}
 Since $|x-y| \geq K- M \geq K/2 \geq 1$ by \eqref{eq:lem:far=dist:choiceofK}, we have that $w(K) \lesssim_\omega \omega(|x-y|)$ (see condition \eqref{eq:mod:intro} of $\omega$) and also $|x-y|^j \leq |x-y|^{m-k}$ for all $0 \leq j \leq m-k.$ These applied to \eqref{eq:p} yield the bound
\begin{equation}\label{eq:lem:dist=far:boundsx-y}
 \Big \|A_k(x)   -\sum_{j=0}^{m-k} \frac{1}{j!}A_{k+j}(y)(x-y)^j \Big \|    \lesssim \varepsilon  |x-y|^{m-k} \omega(|x-y|) , \quad k=0,\ldots, m.
\end{equation}
Dividing by the term $|x-y|^{m-k} \omega(|x-y|)$, and taking the maximum among $k=0,\ldots,m,$ we arrive at $R(\mathcal{A},x,y)/\omega(|x-y|) \lesssim \varepsilon ,$ which is the desired estimate.
	
\smallskip

Let us now study the other (non-symmetric) case of \eqref{eq:lem:far=dist}, that is, when $|y| \geq K,$ and $|x|\leq M.$ Let us write, for every $0 \leq k \leq m,$
\begin{align}\label{eq:lem:far=dist:chainestimates}
 &\Big \|A_k(x)   -\sum_{j=0}^{m-k} \frac{1}{j!}A_{k+j}(y)(x-y)^j \Big \| \nonumber \\
& \leq \Big \|\sum_{j=0}^{m-k} \frac{1}{j!} \Big( A_{k+j}(y) - \sum_{l=0}^{m-k-j} \frac{1}{l!}A_{k+j+l}(x)(y-x)^l \Big) (x-y)^j \Big \| \nonumber \\
& \qquad  + \Big \| A_k(x) - \sum_{j=0}^{m-k} \frac{1}{j!} \sum_{l=0}^{m-k-j} \frac{1}{l!} A_{k+j+l}(x)(y-x)^l   (x-y)^j \Big \| \nonumber  \\
& \lesssim_m \sum_{j=0}^{m-k}  \Big \|A_{k+j}(y) - \sum_{l=0}^{m-k-j} \frac{1}{l!}A_{k+j+l}(x)(y-x)^l \Big \| |x-y|^j +      \max_{1 \leq l + j \leq m-k} \|A_{k+j+l}(x)\||x-y|^{l + j}  \nonumber \\
&   \lesssim_m \sum_{j=0}^{m-k}  \Big \|A_{k+j}(y) - \sum_{l=0}^{m-k-j} \frac{1}{l!}A_{k+j+l}(x)(y-x)^l \Big \| |x-y|^j +     \max_{1 \leq j \leq m } \|A_{j}(x)\||x-y|^{m-k} ;
\end{align}
where in the last estimate we used that $|x-y| \geq K-M \geq 1$ by virtue of \eqref{eq:lem:far=dist:choiceofK}, and so $|x-y|^{l+j} \leq |x-y|^{m-k}$ whenever $1 \leq l+ j \leq m-k$. Now, for the first term in \eqref{eq:lem:far=dist:chainestimates}, we use the estimates \eqref{eq:lem:dist=far:boundsx-y} for each $j=0,\ldots,m-k$, but swapping the roles of $x$ and $y$ (note that the estimates of \eqref{eq:lem:dist=far:boundsx-y} were obtained for $|x| \geq K$ and $|y| \leq   M$). We thus obtain, 
\begin{align*}
\sum_{j=0}^{m-k}  & \Big \|A_{k+j}(y) - \sum_{l=0}^{m-k-j} \frac{1}{l!}A_{k+j+l}(x)(y-x)^l \Big \| |x-y|^j \\
& \lesssim \sum_{j=0}^{m-k}  \varepsilon |x-y|^{m-k-j} \omega(|x-y|) |x-y|^j \lesssim_m   \varepsilon |x-y|^{m-k} \omega(|x-y|).
\end{align*}
As for the second term in \eqref{eq:lem:far=dist:chainestimates}, we use the current assumption $|x| \leq M$ and the second inequality of \eqref{eq:lem:far=dist:choiceofK} to infer that 
$$ 
\max_{1 \leq j \leq m } \|A_{j}(x)\||x-y|^{m-k}\leq \max_{1 \leq j \leq m} \sup_{u \in E, \, |u| \leq M } \|A_j(u)\||x-y|^{m-k} \leq  \varepsilon \omega(K)|x-y|^{m-k}.
$$
But again $|x-y| \geq K-M \geq K/2$ thanks to \eqref{eq:lem:far=dist:choiceofK}, and so property \eqref{eq:mod:intro} of $\omega$ tells us that $\omega(K) \lesssim_\omega \omega(|x-y|).$

Dividing by $|x-y|^{m-k} \omega(|x-y|)$ in \eqref{eq:lem:far=dist:chainestimates} and combining the above two estimates, we conclude that
$$
\frac{R(\mathcal{A},x,y)}{\omega(|x-y|)} =  \max_{0 \leq k \leq m } \frac{\|A_k(x)  -\sum_{j=0}^{m-k} \frac{1}{j!}A_{k+j}(y)(x-y)^j  \| }{|x-y|^{m-k} \omega(|x-y|)} \lesssim \varepsilon,
$$
as desired. 
\end{proof}

So, let us assume that $F \in \VC^{m,\omega}_{\Gamma}(\R^n,V)$ and we will next prove that the restriction of $F$ to $E$ satisfies the properties from Definition \ref{def:scales:jets}. While the proof in the case $\Gamma=\op{small}$ is immediate from Taylor's theorem, for the scales $\Gamma = \op{large}$ and $\Gamma = \op{far}$, we need to study a couple of subcases separately.

\subsection*{The case $\Gamma= \op{small}$} Because $F \in \dot{C}^{m,\omega}(\R^n,V)$ we use Taylor's theorem to write, for each couple $x,y\in \R^n$ of distinct points, and each $k=0,\ldots,m$:
\begin{equation}\label{eq:Taylor:argument}
 \frac{\|D^k F(x)-\sum_{j=0}^{m-k} \frac{1}{j!}D^{k+j} F (y)(x-y)^j\|}{\omega(|x-y|)|x-y|^{m-k}} \leq \frac{1}{(m-k)!} \sup_{z\in [x,y]} \frac{\|D^m F(z)-D^m F(y)\|}{\omega(|x-y|)}.
\end{equation}
Let us briefly remind why, in the non-trivial case $0\leq k \leq m-1$, estimate \eqref{eq:Taylor:argument} holds in the setting of normed-valued $C^{m,\omega}$ mappings. Indeed, for each linear and continuous functional $v^*\in V^*,$ with $\|v^*\|_*=1$ (meaning the dual norm of $V$), we have that $v^* \circ F \in C^m( \R^n, \R)$ with 
$$
D^j( v^* \circ  F )(x) = v^* \circ  D^j F (x) , \quad x\in \R^m, \, j =1, \ldots, m.
$$
Thus, using the notation
$$
\| v^* ( A) \| = \sup\lbrace \| v^*(A(u_1,\ldots,u_k)) \, : \,  u_1, \ldots, u_k \in \mathbb{S}^{n-1} \rbrace , 
$$
for every $k$-linear mapping $A$ from $\R^n$ to $V,$ and defining
$$
A_{x,y,k}: =  D^k F(x)-\sum_{j=0}^{m-k} \frac{1}{j!}D^{k+j} F (y)(x-y)^j, \quad x,y\in \R^n, \, x \neq y, \, k=0,\ldots,m-1,  
$$
Taylor's theorem applied to $v^* \circ F$ gives us, for all $k=0,\ldots,m-1,$
\begin{align*}
  & \frac{  \| v^* \left(   A_{x,y,k} \right)   \|}{\omega(|x-y|)|x-y|^{m-k}}  = \frac{\Big \|   D^k (v^* \circ F)(x)-\sum_{j=0}^{m-k} \frac{1}{j!} D^{k+j} (v^* \circ F ) (y)(x-y)^j  \Big \| }{\omega(|x-y|)|x-y|^{m-k} } \\[0.5em]
 &  \quad \leq   \frac{1}{(m-k)!}   \sup_{z\in [x,y]} \frac{\|D^m (v^* \circ F) (z)-D^m (v^* \circ F)(y)\|}{ \omega(|x-y|)} \\
 &  \quad =  \frac{1}{(m-k)!}  \sup_{z\in [x,y]} \frac{\|v^* \circ D^m  F (z)-v^* \circ D^m F(y)\|}{ \omega(|x-y|)} \leq  \frac{1}{(m-k)!}  \sup_{z\in [x,y]} \frac{\|D^m F(z)-D^m F(y)\|}{\omega(|x-y|)}.
\end{align*}
The Hahn-Banach Theorem in $V$ provided us, for each $k,x,y$ as above, with $v^*\in V^*$ so that $\|v^*\|_*=1$ and $\|v^*(A_{x,y,k})\| = \| A_{x,y,k}\|.$ Together with the previous estimates, this yields \eqref{eq:Taylor:argument}.

One can continue the estimate \eqref{eq:Taylor:argument} by using that $D^m F \in \dot{C}^{0,\omega}(\R^n,V)$, thus deducing that the resctriction of $\lbrace F, DF, \ldots, D^m F \rbrace$ to $E$ defines an $m$-jet in $\dot{\op{J}}^{m,\omega}(E,V)$. To show that those restrictions actually belong to $\dot{\op{VJ}}_{\op{small}}^{m,\omega}(E,V)$ is also very easy: because $F\in \VC^{m,\omega}_{\op{small}}(\R^n,V),$ given $\varepsilon>0$ there exists $\delta>0$ so that 
$$
\|D^m F(u)-D^m F(v)\| \leq \varepsilon \omega(|u-v|), \quad |u-v| \leq \delta.
$$
Thus, assuming that $|x-y| \leq \delta,$ and as obviously then $|z-y| \leq \delta$ for each $z\in [x,y],$ the right hand side of \eqref{eq:Taylor:argument} is bounded from above by $C(m) \varepsilon.$

 \subsection*{The case $\Gamma= \op{large}$} Here we assume \eqref{eq:mod:coer} for $\omega$ and that $F\in \dot{\op{VC}}_{\op{large}}^{m,\omega}(\R^n,V)$, and estimate \eqref{eq:Taylor:argument} in the following manner. For any $\varepsilon>0,$ let $R>0$ so that $|u-v| \geq R $ implies $ \|D^m F(u)-D^m F(v)\| \leq \varepsilon \omega(|u-v|).$ Let us assume that $|x-y| \geq M, $ where $M \gg R$ and its value will be specified in a moment. For those $z\in [x,y]$ so that $|z-y|\geq R,$ it is enough to write
 $$
 \frac{\|D^m F(z)-D^m F(y)\|}{\omega(|x-y|)} \leq \varepsilon \frac{\omega(|z-y|)}{\omega(|x-y|)} \leq \varepsilon. 
 $$
For those $z\in [x,y]$ with $|z-y| <  R, $ we see that
$$
 \frac{\|D^m F(z)-D^m F(y)\|}{\omega(|x-y|)} \leq \| F\|_{\dot C^{m,\omega}(\R^n,V)}\frac{\omega(|z-y|)}{\omega(|x-y|)} \leq \| F\|_{\dot C^{m,\omega}(\R^n,V)}\frac{\omega(R)}{\omega(|x-y|)} .
$$
Due to the assumption \eqref{eq:mod:coer}, if $M \gg R$ is large enough, then $|x-y| \geq M$ implies that the last term can be made smaller than $\varepsilon.$ This shows that the jet given by the restriction of $\lbrace F, DF, \ldots, D^m F \rbrace$ to $E$ belongs to $\dot{\op{VJ}}_{\op{large}}^{m,\omega}(E,V).$

\subsection*{The case $\Gamma= \op{far}$} Again we assume \eqref{eq:mod:coer} for $\omega$ and that $F\in \VC_{\op{far}}^{m,\omega}(\R^n,V).$ Applying Lemma \ref{lem:dist=far} for $D^m F \in \VC_{\op{far}}^{0,\omega}(\R^n,\mathcal{L}^m(\R^n,V))$, it follows that for every $\varepsilon>0$ there exists $R>0$ so that if $u,v\in \R^n$ with $|u| \geq R $, then
\begin{align}\label{eq:applydistfar}
	 \frac{\|D^m F(u)-D^m F(v)\|}{\omega(|u-v|)} \leq \varepsilon. 
\end{align}
Also, by the condition \eqref{eq:mod:coer}, we can find $M>0$ (depending on $\varepsilon$, $R$ and $\omega$) so that
\begin{equation}\label{eq:proofnecessityfar_choiceM}
M \geq 2 R \quad \text{and} \quad \omega(2R) \leq \varepsilon \omega(M/2).
\end{equation}
Now, let $x,y \in \R^n$ be such that $|x|, |y| \geq M$. By the continuity of $D^mF,$ we can find a point $z\in [x,y]$ maximizing the supremum of the right-hand side of \eqref{eq:Taylor:argument}. In the case where $\max( |y|, |z| ) \geq R,$ then 
$$
\|D^m F(z)-D^m F(y)\| \leq \varepsilon \omega(|y-z|) \leq \varepsilon \omega(|x-y|),
$$
by the bound \eqref{eq:applydistfar}. And when $|y|, |z| < R,$ we can estimate as in the case $\Gamma=\op{large}$:
$$
\frac{\|D^m F(z)-D^m F(y)\|}{\omega(|x-y|)} \leq \| F\|_{\dot C^{m,\omega}(\R^n,V)}\frac{\omega(|z-y|)}{\omega(|x-y|)} \leq \| F\|_{\dot C^{m,\omega}(\R^n,V)}\frac{\omega(2R)}{\omega(|x-y|)} .
$$
 But observe that, by the triangle inequality and \eqref{eq:proofnecessityfar_choiceM}, we get $|x-y| \geq M- R \geq M/2.$ In combination with the second inequality of \eqref{eq:proofnecessityfar_choiceM}, we infer that the last term is bounded above by
$$
\| F\|_{\dot C^{m,\omega}(\R^n,V)}\frac{\omega(2R)}{\omega(|x-y|)} \leq \| F\|_{\dot C^{m,\omega}(\R^n,V)} \frac{\omega(2R)}{\omega(M/2)} \leq \| F\|_{\dot C^{m,\omega}(\R^n,V)} \varepsilon \lesssim \varepsilon.
$$    
Thus, we have the desired estimate for all possible $z\in [x,y],$ and so \eqref{eq:Taylor:argument} gives that the restriction of $\lbrace F, DF, \ldots, D^m F \rbrace$ to $E$ belongs to $\dot{\op{VJ}}_{\op{far}}^{m,\omega}(E,V).$
\section{Sufficiency of Theorem \ref{thm:main:ext}}

This section is devoted to the ``if parts'' of Theorem \ref{thm:main:ext}. We extend a jet from $\dot{\op{J}}^{m,\omega}(E,V)$ to a $\dot C^{m,\omega}(\R^n,V)$ function while simultaneously preserving a given vanishing scale. As before, our modulus $\omega :(0, \infty) \to (0, \infty)$ is non-decreasing, with $\lim_{t\to 0}\omega(t) =0$, and such that
\begin{equation}\label{eq:mod:ext1}
\frac{s}{\omega(s)} \leq C_\omega \frac{t}{\omega(t)}, \quad \text{whenever} \quad 0 < s \leq t < \infty,
\end{equation}
for some  $C_\omega >0;$
or equivalently that
\begin{equation}\label{eq:mod:ext2}
\omega(\lambda t ) \leq C_\omega \lambda \omega(t), \quad \text{for all} \quad \lambda \geq 1,\, t > 0.
\end{equation}

The first reduction in the extension Problem \ref{problem:ext} is to notice that the unique continuous extension $\bar{\mathcal{A}}$ of $\mathcal{A} \in \dot{\op{VJ}}^{m,\omega}_{\Gamma}(E,V)$ to the closure $\bar E$ of $E$ satisfies
\begin{align}\label{eq:JETclose}
	\|\bar{\mathcal{A}}\|_{\dot{\op{J}}^{m,\omega}(\overline{E},V)}=\|\mathcal{A}\|_{\dot{\op{J}}^{m,\omega}(E,V)},\quad \text{and} \quad \bar{\mathcal{A}}\in \dot{\op{VJ}}^{m,\omega}_{\Gamma}(\bar{E},V),
\end{align}
for each scale $\Gamma\in \{\op{small}, \op{far},\op{large}\}.$ 

Indeed, this is possible since we are assuming that $V$ is a Banach space, and thus the functions $A_k\in (A_1,\ldots,A_m)\in \mathcal{A}$ map Cauchy sequences of $E$ to Cauchy sequences of $\calL^k(\R^n,V).$

Since $V$ is Banach, so is $\calL^m(\R^n,V).$ Thus if $\{ x_j\}$ is Cauchy in $E$ with a limit $\bar{x}\in \bar{E},$ then $\{A_m(x_j)\}$ is Cauchy in $\calL^m(\R^n,V),$ by the estimate 
\[
\| A_m(x_j) - A_m(x_i)  \| \lesssim \| \mathcal{A} \|_{\dot{\op{J}}^{m,\omega}(E,V)} \omega(|x_j-x_i|)\to 0,\quad \min(i,j)\to\infty.
\] 
Thus the limit $\bar A_m(\bar{x}):=A_m(\bar{x})\in \calL^m(\R^n,V)$ is uniquely determined and in this way we extend $A_m$ to $\bar{E}.$ Now the extension of the case $k=m-1,$ i.e. $A_{m-1}$ as $\bar A_{m-1},$ follows by recursing from the case $k=m$ above and the definition of \eqref{eq:norm:jet}.
Thus for the extension problem, we can assume that $E \subset \R^n$ is closed. In fact, only here we need the completeness of $V.$ Thus, if the given set $E \subset \R^n$ is closed, Theorem \ref{thm:main:ext} holds for $V$ a normed space.

We next describe the linear extension operator defined by the classical Whitney decomposition and the associated partition of unity. We remark that even if we restrict ourselves to the particular case $m=0$ and $V=\R,$ any simple variation of the infimal convolution formula, as mentioned in the introduction, does not seem to provide an extension operator that maps $ \dot {\op{VC}}^{0,\omega}_{\Gamma}(E,\R)\to\dot {\op{VC}}^{0,\omega}_{\Gamma}(\R^n,\R),$ for any of the scales $\Gamma \in \{\op{small}, \op{far},\op{large}\}.$  Moreover, it is an obvious benefit that the extension operator is linear, and of course any operator defined through an infimal convolution formula is automatically non-linear.

\smallskip

\subsubsection*{Whitney partition of unity}\label{section:sufficienty}

We begin by recalling the main properties of the Whitney decomposition of an open set into cubes.
For a closed set $E,$ the Whitney cubes associated with the open set $\R^n \setminus E$ is a collection $ \mathcal{Q} $ of dyadic cubes with the following properties:
\begin{itemize}
\item  There holds that $\bigcup_{Q \in \mathcal{Q}} Q = \R^n \setminus E.$
\item  For every $Q \in \mathcal{Q},$ there holds that $d(Q,E) \leq \diam(Q) \leq 4 d(Q,E).$
\item  If $Q, Q' \in \mathcal{Q}$ are two distinct cubes, then $\mathrm{int}(Q) \cap \mathrm{int}(Q') = \emptyset.$
\end{itemize}
To construct a $C^\infty$ partition of unity associated with these cubes, for each $Q \in \mathcal{Q}$ denote $Q^*:= 9/8Q$ and let $p_Q\in E$ be a point (not necessarily unique) for which $d(E,Q)=d(p_Q,Q).$ A $C^\infty$-Whitney partition of unity is, in particular, a collection of functions $\lbrace \varphi_Q \, : \, Q\in \mathcal{Q} \rbrace$ such that each $\varphi_Q$ is supported on $Q^*.$ Many relevant properties hold for these families of cubes and functions, and we refer to \cite[Chapter VI]{Stein1970} for a detailed exposition of this topic. 

\begin{propo}\label{prop:WhitneyPartition} The Whitney partition of unity satisfies the following properties.
\begin{enumerate}[label=\normalfont{(\roman*)}]
\item\label{Wh:totalunion}  There holds that 
$$
\bigcup_{Q \in \mathcal{Q}} Q =\bigcup_{Q \in \mathcal{Q}} Q^* = \R^n \setminus E.
$$
\item\label{Wh:bdoverlapp} There exists a dimensional constant $N(n)>0$ so that
\[
\sum_{Q\in\calQ}1_{Q^*}\leq N(n);
\]
i.e. every $x\in \R^n  \setminus E$ is contained in a neighbourhood that intersects at most $N(n)$ cubes of the family $\lbrace Q^* \, : \, Q\in \mathcal{Q} \rbrace.$ 
\item\label{Wh:estimatedistancecubes} There exists an absolute constant $C>0$ so that for all cubes $Q\in \mathcal{Q}$ there holds that 
$$
|p_Q-y| \leq C |x-y|, \quad \text{whenever} \quad x\in Q^*,\, y\in E.
$$

\item\label{Wh:smoothpartition} For each $Q\in\calQ,$ there holds that 
$$
0\leq \varphi_Q \leq 1_{Q^*},\qquad \varphi_Q \in C^\infty(\R^n).
$$
\item\label{Wh:sumofpartition1} Partition of unity:  for all $x\in \R^n\setminus E,$ there holds that 
$$\sum_{Q \in \mathcal{Q}} \varphi_Q(x)=\sum_{Q  \, : \,  Q^*\ni x} \varphi_Q(x)=1,\qquad \sum_{Q \in \mathcal{Q}} D^k \varphi_Q(x)=0, \quad k \in \N.
$$ 
Notice that the latter follows from the first and \ref{Wh:bdoverlapp}.
\item\label{Wh:estimategradients} For each $k \in \N,$ there exists a constant $C(n,k)$ so that for all $Q\in  \mathcal{Q}$ and  $z\in Q^*,$ there holds that  
$$
\| D^k \varphi_Q(z) \| \leq C(n,k) d(z,E)^{-k}.
$$ 
\end{enumerate}
\end{propo}

\medskip

Now, given a jet $\mathcal{A}=\lbrace A_k \rbrace_{k=0}^m \in \dot{\op{J}}^{m,\omega}(E,V)$ and a point $y\in E,$ we define the polynomial
$$
P_y^{\mathcal{A}} : \R^n \to V,\qquad P_y^{\mathcal{A}}(x)= \sum_{k=0}^m \frac{1}{k!} A_k(y)(x-y)^k, \quad x\in \R^n .
$$
And with these, the Whitney extension operator 

\begin{equation}\label{eq:WhitneyExt1}
	\begin{split}
		\calW (\mathcal{A} ) (x) =  \begin{cases}
			A_0(x),\quad &  x\in E,\\
			\sum\limits_{Q\in \mathcal{Q}} \varphi_Q(x) P_{p_Q}^{\mathcal{A}}(x) , \quad \quad &x\in \mathbb{R}^n\setminus E.
		\end{cases} 
	\end{split}
\end{equation}
Using the properties \ref{Wh:bdoverlapp} and \ref{Wh:smoothpartition}, it is easy to see that $\calW f \in C^\infty(\R^n \setminus E),$ and for every multi-index $\alpha \in ( \N \cup \lbrace 0 \rbrace )^n$, the $\alpha$-partial derivative $D^\alpha (\calW f) : \R^n \setminus E \to V$ is given by 
\begin{equation}\label{eq:formulaLeibniz}
D^\alpha(\calW (\mathcal{A}))(x)   =\sum_{Q \in \mathcal{Q}} \sum_{\beta \leq \alpha} \binom{\alpha}{\beta} D^\beta \varphi_Q(x)    D^{\alpha-\beta}P_{p_Q}^{\mathcal{A}}(x), \quad  x\in \R^n \setminus E.
\end{equation}
Naturally, for any function $ G: \R^n \to V,$ the $\beta$-partial derivative $D^\beta G (x)$ at $x\in \R^n$ is defined as
$$
D^\beta G(x)= \frac{\partial^{|\beta|} G}{\partial x_1^{\beta_1} \cdots \partial x_n^{\beta_n}}(x) =  D^{|\beta|} G(x) \big ( \overbrace{e_1, \ldots , e_1}^{\beta_1 \text{ times}}, \ldots, \overbrace{e_n, \ldots , e_n}^{\beta_n \text{ times}} \big ), 
$$
where $e_i$ is the $i$th unit vector.

\smallskip
A classical result is that for a $V=\R$ valued $m$-jet $\mathcal{A} \in  \dot{\op{J}}^{m,\omega}(E,V),$ there holds that 
\begin{align}\label{eq:WhitneyExt2}
	\calW (\mathcal{A}) \in \dot C^{m, \omega}(\R^n,V),\qquad \|\calW (\mathcal{A})\|_{\dot C^{m,\omega}(\R^n,V)} \leq \kappa(n, m ,C_\omega) \|\mathcal{A}\|_{\dot{\op{J}}^{m,\omega}(E,V)},
\end{align}
for a constant $\kappa(n,m,C_\omega)$ depending only on $n$, $m,$ and the constant $C_\omega$ from \eqref{eq:mod:ext1}. A proof can be found in \cite[Chapter VI, p.175]{Stein1970}. However, a small modification of the arguments therein shows that
\eqref{eq:WhitneyExt2} is satisfied 
for all $\mathcal{A}\in    \dot{\op{J}}^{m, \omega}(E,V),$ with an arbitrary normed space $V.$ 
\smallskip

Nevertheless, it seems very far from elementary to determine the action of the Whitney extension operator \eqref{eq:WhitneyExt1} over the three vanishing scales.
For each scale, we study pairs $(x,y)$ and their relative position with respect to $E$ and $|x-y|.$ Although this separation into cases will be the same for each scale, we split the proof of Theorem \ref{thm:main:ext} into three parts, with each of the scales presenting its own particular difficulties.

During the proof we will fix an $m$-jet $\mathcal{A} \in  \dot{\op{VJ}}^{m,\omega}_{\Gamma}(E,V) $, whose norm $\|\mathcal{A}\|_{\dot{\op{J}}^{m,\omega}(E,V)}<\infty$ is considered to be an \textit{absolute} constant. 
		Moreover, we indicate the dependence on the dimension or on the order of smoothness $m$ in some estimates by including subscripts, e.g. $\lesssim_n,$ $\lesssim_m,$ $ \lesssim_{n,m}.$ The same applies for the constant $C_\omega$ associated with the modulus $\omega$ from \eqref{eq:mod:ext1}. Thus, when using the notation $ A \lesssim B,$ the constant $C$ involved in the estimate $A \leq C  \cdot B$ is allowed to depend on $n,$ $m,$ $\|\mathcal{A}\|_{\dot{\op{J}}^{m,\omega}(E,V)}$ and $C_\omega.$ 

To simplify notation, we denote by $F:=  \mathcal{W}(\mathcal{A})$ the Whitney Extension \eqref{eq:WhitneyExt1} and also $P_{y}=P_y^{\mathcal{A}},$ for each $y\in E.$

We will use several times the estimates of the following Lemma \ref{lem:derivatives:higherorder}, which are implicitly proved in \cite{Stein1970}.

\begin{lemma}\label{lem:derivatives:higherorder}
 Let $y\in E.$ Let $x\in \R^n\setminus E$ and $\xi = \xi(x) \in E$ be a point that minimizes $\dist(x,E)=|x-\xi|.$ Then,
\begin{enumerate}[label=\normalfont{(\alph*)}]
\item \label{lemmaestimates:partaapproximationDmF} $
\|D^m F(x)- A_m(y)\| \lesssim_{m,n} \sum_{Q^* \ni x} \Big( R (\mathcal{A}, y,p_Q)   +  R (\mathcal{A},\xi,p_Q)  \Big ),
$
\smallskip
\item \label{lemmaestimates:partbestimateDmplus1F} $ \|D^{m+1} F(x)\| \lesssim_{m,n} \sum_{Q^* \ni x}R (\mathcal{A},\xi,p_Q) |x-\xi|^{-1}.$ 
\end{enumerate}
\end{lemma}
\begin{proof}
\item[(a)] Using multi-index notation, the polynomials $P_y,$ $y\in E,$ can be expressed as
\begin{equation}\label{eq:alternate:expression:poly}
P_y(x) := \sum_{k=0}^m \frac{1}{k!} A_k(y)(x-y)^k = \sum_{|\beta| \leq m} \frac{1}{\beta!} A_{\beta}(y)(x-y)^\beta,
\end{equation}
where the $A_{\beta}$s are defined as follows. For any multi-index $\beta = (\beta_1,\dots,\beta_n)\in (\N\cup\{0\})^n$ of order $|\beta| := \sum_{j=1}^n \beta_j \leq m,$ and $y\in E,$ we set 
	\begin{equation}\label{eq:Abetaexpression}
		 A_\beta(y) := A_{|\beta|}(y)(\overbrace{e_1, \ldots, e_1}^{\beta_1 \text{ times}}, \ldots , \overbrace{e_n, \ldots, e_n}^{\beta_n \text{ times}});
		\end{equation}
where $e_i$ is the $i$th unit vector. Also, for any such $\beta$ and $z=(z_1,\ldots,z_n) \in \R^n,$ we denote
$$
z^\beta= z_1^{\beta_1} \cdots z_n^{\beta_n}.
$$ 
The second identity in \eqref{eq:alternate:expression:poly} follows from the symmetry and the $k$-linearity of the $A_k$s' and a basic combinatorial argument. 
Thus, for every multi-index $\alpha \in (\N\cup\{0\})^n$ of order $|\alpha| =  m,$ one has $D^\alpha P_y(x)= A_\alpha(y),$ for every $x\in \R^n$ and $y\in E.$
Now, to show part \ref{lemmaestimates:partaapproximationDmF} it is enough to show that for each multi-index $\alpha$ of order $|\alpha| = m,$ there holds that 
\begin{align}
	\|D^\alpha F(x)- A_\alpha(y)\| \lesssim_{m,n} \sum_{Q^* \ni x} \Big( R (\mathcal{A}, y,p_Q)   +  R (\mathcal{A},\xi,p_Q)  \Big ).
\end{align}
Then, using the multilinearity of $D^mF(x)-A_m(y),$ the bound \ref{lemmaestimates:partaapproximationDmF} follows.

 Also, by property \ref{Wh:sumofpartition1}, $\sum_{Q} D^\gamma \varphi_Q(x)=0$ for any multi-index $\gamma$ with $|\gamma|\geq 1.$ These observations and formula \eqref{eq:formulaLeibniz} permit to write, for $x\in \R^n \setminus E$, $y\in E$, and for a multi-index $\alpha$ of order $|\alpha| = m:$
\begin{align*}
	D^\alpha F(x)-A_\alpha(y) & = \sum_{Q } D^\alpha (\varphi_Q \cdot P_{p_Q} )(x)-A_\alpha(y) = \sum_{Q} \varphi_Q(x) \left( A_\alpha(p_Q)-A_\alpha(y) \right) + \\
	& \quad \: + \sum_{Q} \sum_{\beta \leq \alpha, \, \beta \neq \alpha} \binom{\alpha}{\beta} D^{\alpha-\beta}\varphi_Q(x) \left( D^\beta P_{p_Q}(x)- D^\beta P_\xi(x) \right).
\end{align*} 
The first term is estimated using the definition of $R (\mathcal{A}, \cdot , \cdot)$ \eqref{eq:vanishing:remainders}:
\begin{align*}
\Big \| \sum_{Q} \varphi_Q(x) \left( A_\alpha(p_Q)-A_\alpha(y) \right) \Big \| &  \leq \sum_{Q^* \ni x}  \|A_\alpha(p_Q)-A_\alpha(y)\| \leq  \sum_{Q^* \ni x}  R (\mathcal{A},y,p_Q)  .
\end{align*}

For the second term, we take into account that the polynomials $P_{p_Q}- P_\xi$ have degree up to $m,$ and so, for $|\beta| \leq m,$ 
	 \begin{align*}
D^\beta P_{p_Q}(x)- D^\beta P_\xi(x) & = \sum_{|\gamma| \leq m-|\beta|} \frac{1}{\gamma!} \left[ D^{\beta+\gamma}(P_{p_Q}-  P_\xi)(\xi) \right](x-\xi)^\gamma \\ 
& = \sum_{|\gamma| \leq m-|\beta|} \frac{1}{\gamma!} \left[ D^{\beta+\gamma}P_{p_Q}(\xi)-  A_{\beta+\gamma}(\xi) \right](x-\xi)^\gamma \\
& =  \sum_{|\gamma| \leq m-|\beta|} \frac{1}{\gamma!} \left[ \sum_{|\delta| \leq m-|\beta+ \gamma|} \frac{1}{\delta!} A_{\delta+\beta+\gamma}(p_Q)(\xi-p_Q)^\delta-  A_{\beta+\gamma}(\xi) \right](x-\xi)^\gamma.
\end{align*}
Now from the formula \eqref{eq:vanishing:remainders}, it follows that 
\begin{equation}\label{eq:estimates:deriv:poly}
\|D^\beta P_{p_Q}(x)- D^\beta P_\xi(x)\| \leq \sum_{j =0}^{m-|\beta|}  R (\mathcal{A},\xi,p_Q)  |\xi-p_Q|^{m- |\beta|-j } |x-\xi|^{j}.
\end{equation}
This estimate and property \ref{Wh:estimategradients}, and also $|\alpha| = m,$ lead us to
\begin{align}\label{eq:conclusion:(a)}
\Big  \|   \sum_{Q} & \sum_{\beta \leq \alpha,   \, \beta \neq \alpha} \binom{\alpha}{\beta}   D^{\alpha-\beta}\varphi_Q(x) \left( D^\beta P_{p_Q}(x)- D^\beta P_\xi(x) \right) \Big \| \nonumber \\
& \quad \leq C(n,m) \sum_{Q^* \ni x} \sum_{\beta \leq \alpha,   \, \beta \neq \alpha}  |x-\xi|^{-(m-|\beta|)} \| D^\beta P_{p_Q}(x)- D^\beta P_\xi(x)\| \nonumber \\
&  \quad \leq C(n,m  ) \sum_{Q^* \ni x} R (\mathcal{A},\xi,p_Q)  \sum_{k=0}^{m-1} \sum_{j=0}^{m-k} \left(\frac{|\xi-p_Q|}{|x-\xi|}\right)^{m-k-j}\leq C(n,m) \sum_{Q^* \ni x} R (\mathcal{A},\xi,p_Q),
\end{align}
where in the last bound we used that $|\xi-p_Q| \leq C |x-\xi|,$ for an absolute constant $C,$ by \ref{Wh:estimatedistancecubes}.

\smallskip  

\item[(b)] The proof is very similar to that of \ref{lemmaestimates:partaapproximationDmF}. Since the polynomials have degree up to $m,$ if $\alpha$ is a multi-index with order $|\alpha|=m+1,$ then
\begin{align*}
D^{\alpha} F(x) & =\sum_{Q} \sum_{\beta \leq \alpha, |\beta| \leq m} \binom{\alpha}{\beta} D^{\alpha-\beta}\varphi_Q(x) D^\beta P_{p_Q}(x) \\
& = \sum_{Q} \sum_{\beta \leq \alpha, |\beta| \leq m} \binom{\alpha}{\beta} D^{\alpha-\beta}\varphi_Q(x)  \left( D^\beta P_{p_Q}(x)- D^\beta P_\xi(x) \right).
\end{align*}
By the estimate \eqref{eq:estimates:deriv:poly}, the arguments that led us to \eqref{eq:conclusion:(a)} and to the conclusion of \ref{lemmaestimates:partaapproximationDmF}, we derive
\begin{align*}
\| D^{\alpha} F(x)  \| & \leq C(n,m) \sum_{Q^* \ni x} \sum_{k=0}^m \sum_{\substack{|\beta| = k}}  |x-\xi|^{-(m+1-k)} \| D^{\beta} P_{p_Q}(x)- D^{\beta} P_\xi(x)\| \\
& \leq C(n,m) \sum_{Q^* \ni x}R (\mathcal{A},\xi,p_Q)  |x-\xi|^{-1}.
\end{align*}
Then, using the multilinearity of $D^{m+1}F(x)$ the bound \ref{lemmaestimates:partbestimateDmplus1F} follows.
\end{proof}

\begin{proof}[Proof of Theorem \ref{thm:main:ext} for $\Gamma = \op{small}$]
Given $\mathcal{A} \in \dot{\op{VJ}}^{m,\omega}_{\mathrm{small}} (E,V),$ we show that $F \in \VC^{m,\omega}_{\mathrm{small}} (\R^n,V)$, i.e. that
$$
\lim_{|x-y| \to 0} \frac{\| D^m F(x)-D^m F (y)\|}{\omega(|x-y|)}=0,
$$
uniformly on $x,y\in \R^n.$ Denote
\begin{equation}\label{eq:definitionS_f}
\op{S}(t):= \sup \left\lbrace   \frac{R (\mathcal{A},u,v)}{\omega(|u-v|)} \, : \,  u,v\in E, \, 0< |u-v| \leq t  \right\rbrace,
\end{equation}
understanding that $S(t)=0$ if there is no such couple $u,v\in E$ with $0< |u-v| \leq t.$ We have that $\op{S}(t)\to 0$ as $t\to 0$ and $\op{S}(t)\leq \|\mathcal{A} \|_{\dot{\op{J}}^{m,\omega}(E,V)}.$ 
We distinguish three possible situations for any couple of distinct points $x,y\in \R^n,$ where at least one of them is outside $E.$ 

\smallskip

\textbf{Case 1.} Assume $x\in \R^n \setminus E$ and $y\in E.$

 By the property \ref{prop:WhitneyPartition}  in Proposition \ref{Wh:estimatedistancecubes}, if $x\in Q^* $ and $u\in E,$ then  $|p_Q-u|  \leq C|x-u|, $ for some absolute constant $C>0.$ Applying this when $u$ is equal to the given $y,$ and also when $u= \xi$ for some $\xi = \xi(x) \in E$ that minimizes $\dist(x,E)=|x-\xi|,$ we obtain that
$$
|p_Q-y | \leq C |x-y|, \quad \text{and} \quad |p_Q-\xi| \leq C |x-\xi| = C \dist(x,E) \leq C |x-y|.
$$
Together with Lemma \ref{lem:derivatives:higherorder}\ref{lemmaestimates:partaapproximationDmF}, and the definition of $\op{S},$ we find that
\begin{align*}
	 \|D^m F(x)-D^m F(y)\| & =	\|D^m F(x)-A_m(y)\|  \\
	 &  \lesssim \sum_{Q^* \ni x} \Big( R (\mathcal{A}, y,p_Q)+  R (\mathcal{A},\xi,p_Q) \Big )  \\
	&  \leq \sum_{Q^*\ni x} \left( \op{S}(\abs{y-p_Q}) \omega(|y-p_Q|) + \op{S}(\abs{\xi-p_Q}) \omega(|\xi-p_Q|)\right)    \\
	&  \lesssim_n  \op{S}(C|x-y|)\omega(|x-y|),
\end{align*}
where we used property \eqref{eq:mod:ext2} of $\omega$ in the last inequality. Therefore
\begin{align}\label{eq:x1}
\frac{	\|D^m F(x)-D^m F(y) \|}{\omega(|x-y|)}  \lesssim  \op{S}(C|x-y|),
\end{align}
and the right-hand side tends to zero as $\abs{x-y}\to 0.$

\smallskip

\textbf{Case 2.} Assume that $x,y\in \R^n \setminus E$ and $\dist([x,y], E) \leq |x-y|.$
Pick $z\in [x,y]$ and $p \in E$ minimizing $\dist([x,y], E) = |z-p|.$ Then,
$$
|x-p| \leq |x-z| + |z-p| \leq |x-y| + |x-y| = 2 |x-y|
$$ 
and the same holds for $|y-p|.$ Then $|x-p|, |y-p| \leq 2|x-y|$ and applying the estimate \eqref{eq:x1} from Case 1 (for $x \in \R^n \setminus E$ and $p\in E,$ and for $y\in \R^n \setminus E$ and $p\in E$) we obtain,
\begin{align*}
	\|D^m F(x)-D^m F(y)\|   &\leq \|D^m F(x)-D^m F(p)\| + \|D^m F(p)-D^m F(y)\|  \\
	&\lesssim  \op{S}(2C|x-y|)\omega(2|x-y|),
\end{align*}
which is a bound of the correct form.

\smallskip

\textbf{Case 3.} Assume $x, y\in  \R^n \setminus E,$ and $\dist([x,y], E) \geq |x-y|.$ Here we use the assumption $\lim_{t \to 0} t/\omega(t)=0.$ 

Let us begin with a computation for arbitrary $z\in \R^n \setminus E.$ Let $\xi = \xi(z) \in E$ minimize $\dist(z,E)=|z-\xi|.$ Then, using first Lemma \ref{lem:derivatives:higherorder}\ref{lemmaestimates:partbestimateDmplus1F}, then the definition of $S$ in \eqref{eq:definitionS_f}, and lastly the point \ref{Wh:estimatedistancecubes} of Proposition \ref{prop:WhitneyPartition}, we get
\begin{align*}
	\|  D^{m+1} F(z)\|    \lesssim \sum_{Q^* \ni z} \frac{R (\mathcal{A},\xi,p_Q)}{|z-\xi|}  \leq \sum_{Q^* \ni z}   \frac{\omega(|p_Q-\xi|) \op{S}(|p_Q-\xi|)}{  \dist(z,E)} \lesssim_n   \frac{\omega(\dist(z,E))\op{S}(C\dist(z,E))}{  \dist(z,E)} .
\end{align*}
In the last inequality, together with \ref{Wh:estimatedistancecubes}, we used the fact that every point of $\R^n \setminus E$ is contained in at most $N(n)$ cubes $Q^*$ as well as property \eqref{eq:mod:ext2} of $\omega.$  
As $D^m F$ is differentiable on $\R^n \setminus E$ and the segment $[x,y]$ lies outside $E,$ by the mean value inequality (for normed-valued mappings) and the above estimate we obtain
\begin{equation}\label{eq:Vzxy}
			\frac{\|D^m F (x)-D^m F(y)\|}{\omega(|x-y|) }  \leq \frac{ |x-y| }{\omega(|x-y|)} \sup_{z\in [x,y]} \| D^{m+1} F(z) \|  \lesssim_n \sup_{z\in [x,y]} U_z(x,y);
\end{equation}
where 
$$
U_z(x,y):=\frac{ |x-y| }{\omega(|x-y|)}\frac{\omega(\dist(z,E))}{ \dist(z,E)} \mathrm{S}(C\dist(z,E)) .
$$ 
Now we fix $z\in [x,y]$ and we consider the behaviour of $\dist(z,E)$ as $|x-y|$ tends to zero. By $\lim_{t \to 0} S(t)=0,$ we have the following: for any $\varepsilon>0,$ let $\delta>0$ be so that $\mathrm{S}(t) \leq \varepsilon,$ whenever $t \leq \delta.$   First, if $C\dist(z,E) \leq \delta,$ then by
 $ |x-y| \leq \dist(z,E)$ and property \eqref{eq:mod:ext1} for $\omega,$ we obtain 
$$
U_z(x,y)\lesssim \mathrm{S}(C\dist(z,E)) \leq \varepsilon.
$$
On the other hand, if $C\dist(z,E) \geq \delta$, we use the property \eqref{eq:mod:ext1} of $\omega$ and that $\mathrm{S} \leq \|\mathcal{A}\|_{\dot{\op{J}}^{m,\omega}(E,V)}$ to bound
$$
U_z(x,y) \lesssim_{\delta}  \frac{ |x-y| }{\omega(|x-y|)}\cdot 1\cdot \|\mathcal{A}\|_{\dot{\op{J}}^{m,\omega}(E,V)} \lesssim \frac{ |x-y| }{\omega(|x-y|)};
$$
now the right-hand side tends to zero as $|x-y|$ tends to $0,$ by the condition \eqref{eq:mod:regularity}. We conclude from the above cases and  \eqref{eq:Vzxy} that $\|D^m F(x)- D^m F(y)\|/\omega(|x-y|) \to 0$ as $|x-y| \to 0,$ uniformly.
\end{proof}

\medskip

\begin{proof}[Proof of Theorem \ref{thm:main:ext} for $\Gamma = \op{large}$]

Let $\mathcal{A} \in \dot{\op{VJ}}^{m,\omega}_{\mathrm{large}} (E,V)$ and we show that $ F \in \VC^{m,\omega}_{\mathrm{large}} (\R^n,V).$
Denote
$$
\op{L}(t):= \sup \left\lbrace \frac{R (\mathcal{A},u,v)}{\omega(|u-v|)} \, : \,  u,v\in E, \,  |u-v| \geq t \right\rbrace,
$$
so that $L(t)\leq \| \mathcal{A}\|_{\dot{\op{J}}^{m,\omega}(E,V)}$, $L$ is non-increasing, and $L(t) \to 0$ as $t\to\infty.$  We understand $S(t)=0$ when there are no $u,v\in E$ with $|u-v| \geq t.$

\smallskip

\textbf{Case 1.} Let $x\in \R^n \setminus E$ and $y\in E.$

For each cube $Q\in \mathcal{Q}$ so that $x\in Q^*,$ we have $|p_Q-y | \leq C |x-y|$ for an absolute constant $C;$ see \ref{Wh:estimatedistancecubes}. Let $\xi = \xi(x) \in E$ minimize $\dist(x,E)=|x-\xi|.$ In this case, although we are letting $|x-y| \to \infty,$ we do not know whether $|p_Q-y|$ or $|p_Q-\xi|$ are large or not. Therefore, we need to study the behaviour of $|p_Q-z|$, for $z\in \lbrace y,\xi \rbrace$ as $|x-y|\to\infty.$  Given $\varepsilon>0, $ we choose $M>0$ large enough so that $ \op{L}(M) \leq \varepsilon.$ Note that such an $M>0$ exists due to the property $\underset{t \to \infty}{\lim} L(t)=0.$ Also, using $\omega(\infty)=\infty,$ we find $K>M$ such that $\omega(t)  \geq \varepsilon^{-1} \omega(M),$ provided that $t\geq K.$

Let us assume that $|x-y| \geq K,$ and examine two cases separately.
If $|p_Q-z| \geq M,$ then $\op{L}(|p_Q-z|) \leq    \op{L}(M)$ and so
\begin{align}\label{eq:x3}
	\frac{ R (\mathcal{A},z,p_Q)}{\omega(|x-y|)} \leq \op{L}(|p_Q-z|) \frac{\omega(|p_Q-z|)}{\omega(|x-y|)}  \leq \op{L}(M) \frac{\omega(C|x-y|)}{\omega(|x-y|)}\lesssim     \op{L}(M) \leq \varepsilon.
\end{align}
On the other hand, if $|p_Q-z|\leq M$, then the assumption $|x-y| >K$ gives
\begin{align}\label{eq:x2}
	 \frac{R (\mathcal{A},z,p_Q)}{\omega(|x-y|)}\leq \| \mathcal{A}\|_{\dot{\op{J}}^{m,\omega}(E,V)}\frac{\omega(|z-p_Q|)}{\omega(|x-y|)}\lesssim \frac{\omega(M)}{\omega(|x-y|)} \leq \varepsilon.
\end{align}

We have used that if $z\in \lbrace y, \xi \rbrace,$ then $|p_Q-z| \leq C |x-z| \leq C |x-y|,$ by virtue of \ref{Wh:estimatedistancecubes}. Thus the bounds \eqref{eq:x2} and \eqref{eq:x3} tell us that regardless of the size of $|p_Q-z|,$ we have 
\begin{align}\label{eq:x4}
\frac{  R (\mathcal{A},z,p_Q)}{\omega(|x-y|)}  \lesssim  \varepsilon, \quad z\in \lbrace y, \xi \rbrace.
\end{align}
From the bound \eqref{eq:x4}, the fact that $D^m F(y)=A_m(y)$, and Lemma \ref{lem:derivatives:higherorder}\ref{lemmaestimates:partaapproximationDmF}, we obtain
\begin{align*}
 	 \frac{\|D^m F (x) - D^m F(y)\| }{\omega(|x-y|)}  = 	\frac{\|D^m F (x) - A_m(y)\| }{\omega(|x-y|)}    \lesssim  \sum_{Q^* \ni x} \left(  \frac{ R (\mathcal{A},y,p_Q)}{\omega(|x-y|)} +  \frac{ R (\mathcal{A},\xi,p_Q)}{\omega(|x-y|)}   \right)   \lesssim_n \varepsilon,
\end{align*}
whenever $|x-y| \geq K.$ 

\smallskip

\textbf{Case 2.} Assume $x,y\in \R^n \setminus E,$ and $\dist([x,y], E) \leq |x-y|.$
Let $p = p(x,y) \in E$ minimize $\dist([x,y], E).$ Then, again, $|x-p|, |y-p| \leq 2 |x-y|.$ As in Case 1, given $\varepsilon>0,$ there exists $M>0$ (independent of $x\in \R^n \setminus E$ and $p\in E$) such that: if $|x-p| \geq M,$ then 
\begin{align}\label{eq:x5}
	\|D^m F(x)- D^m F  (p)\| \leq \varepsilon \omega(|x-p|) \leq   \varepsilon \omega(2 |x-y|) \lesssim \varepsilon \omega(|x-y|).
\end{align}
Now, consider  $|x-p| \leq M$ and let $K>0$ be so large that $\varepsilon\omega(K) >  \omega(M).$ Now if also $|x-y| \geq K,$ then we have 
\begin{align}\label{eq:x6}
	\|D^m F (x)-D^m F(p)\| \leq \| F \|_{\dot C^{m,\omega}(\R^n,V)} \omega(|x-p|) \lesssim \omega(M)\leq \varepsilon\omega(K) \leq \varepsilon \omega(|x-y|).
\end{align}
We used above the fact that the Whitney extension is a bounded operator. Combining \eqref{eq:x5} and \eqref{eq:x6}, we obtain 
\begin{align}\label{eq:x7}
	\sup_{|x-y|>K} \frac{ \|D^m F (x)-D^m F(p) \| }{\omega(|x-y|)}  \lesssim  \varepsilon.
\end{align}
And the same bound holds with $y$ in place of $x;$ thus by triangle inequality,
\begin{align*}
	&\sup_{|x-y|>K} \frac{ \| D^m F(x)-D^m F (y) \| }{\omega(|x-y|)} \\ 
	&\leq \sup_{|x-y|>K} \frac{ \|D^m F (x)-D^m F(p)\| }{\omega(|x-y|)}+\sup_{|x-y|>K} \frac{ \|D^m F (p)-D^m F(y)\| }{\omega(|x-y|)}\lesssim \varepsilon.
\end{align*}

\smallskip

\textbf{Case 3.} Assume $x, y\in  \R^n \setminus E$ 
and $d([x,y],E)>|x-y|.$

For any $\varepsilon>0,$ let $M>0$ be so that $L(M) \leq \varepsilon$ and also choose $K\gg M$ so that $ \varepsilon \omega(K)\geq \omega(M).$ For any two points $u,v\in E,$ we have either $ R(\mathcal{A},u,v)\leq \varepsilon \omega(|u-v|)$ (when $|u-v|\geq M$) or $R (\mathcal{A},u,v)\leq  \|\mathcal{A}\|_{\dot{\op{VJ}}^{m,\omega}(E,V)} \omega(M)$ (when $|u-v|\leq M$). In other words,
\begin{equation}\label{equationgatheringestimates}
R(\mathcal{A},u,v) \lesssim  \max\lbrace \varepsilon \, \omega(|u-v|) , \omega(M)  \rbrace, \quad \text{ for all } \quad u,v\in E.
\end{equation}

Let $z\in [x,y]$ and let $\xi = \xi(z)\in E$ minimize $\dist(z,E) = |z-\xi |.$  Employing first Lemma \ref{lem:derivatives:higherorder}\ref{lemmaestimates:partbestimateDmplus1F}, then \eqref{equationgatheringestimates}, and property \ref{Wh:bdoverlapp} of Proposition \ref{prop:WhitneyPartition}, we derive
\begin{align}
	\|   D^{m+1} F(z) \|   &   \lesssim_n \sum_{Q^* \ni z} \frac{ R(\mathcal{A}, \xi, p_Q) }{\dist(z,E)}  \lesssim_n \max_{Q\in \mathcal{Q} \,  : \, Q^*\ni z} \frac{ R(\mathcal{A}, \xi, p_Q) }{\dist(z,E)}   \label{eq:z00} \\
	& \leq \max_{Q\in \mathcal{Q} \,  : \, Q^*\ni z} \frac{\max\lbrace \varepsilon \, \omega(|p_Q-\xi|) , \omega(M)  \rbrace }{d(z,E)}  \lesssim_n  \frac{\max\lbrace \varepsilon \, \omega(d(z,E)) ,   \omega(M)  \rbrace }{d(z,E)}. \label{eq:z0}
\end{align}
We used the fact that when $z\in Q^*,$ then $|p_Q- \xi | \leq C |z-\xi|.$ Then \eqref{eq:z0} and the mean value inequality (for normed-valued mappings) give 
\begin{equation}\label{eq:z1}
\begin{split}
			\frac{\|D^m F(x)- D^m F(y)\|}{\omega(|x-y|) }  &\leq\frac{ |x-y| }{\omega(|x-y|)} \sup_{z\in [x,y]} \| D^{m+1} F(z) \|  \\ 
	&\lesssim_n \sup_{z\in [x,y]} \frac{ |x-y| }{\omega(|x-y|)} \frac{\max\lbrace \varepsilon \, \omega(d(z,E)) ,   \omega(M)  \rbrace }{d(z,E)} .
\end{split}
\end{equation}
By property \eqref{eq:mod:ext1} of $\omega$ and $\dist(z,E) \geq |x-y|,$ for all $z\in [x,y],$ we continue,
\begin{equation}\label{eq:z2}
\op{RHS}\eqref{eq:z1} \lesssim \sup_{z\in [x,y]}  \max \left\lbrace \varepsilon   ,  \frac{ |x-y| }{\omega(|x-y|)} \frac{\omega(M)}{d(z,E)}  \right\rbrace \leq    \max \left\lbrace \varepsilon   ,  \frac{ \omega(M)}{\omega(|x-y|)}   \right\rbrace.
\end{equation}
Recalling that $\varepsilon \omega(K)\geq \omega(M)$ we conclude
$$
\sup_{|x-y|>K}\frac{\|D^m F(x)- D^m F(y)\|}{\omega(|x-y|) } \lesssim   \sup_{|x-y|>K}  \max \left\lbrace \varepsilon   ,  \frac{ \omega(M)}{\omega(|x-y|)}   \right\rbrace \leq \varepsilon.
$$
\end{proof}

\begin{proof}[Proof of Theorem \ref{thm:main:ext} when $\Gamma =\op{far}$]
Here we assume that $\omega(\infty)= \infty$.
We denote 
\begin{align*}
	\op{D}(t):= \sup \left\lbrace \frac{R(\mathcal{A},u,v)}{\omega(|u-v|)} \, : \,  u,v\in E, \,  \max(|u|, |v|) \geq t \right\rbrace,
\end{align*}
so that $\op{D}(t)\leq \| \mathcal{A} \|_{\dot{\op{J}}^{m,\omega}(E,V)}$ and $\mathrm{D}$ is non-increasing, and moreover, thanks to Lemma \ref{lem:dist=far}, that $\op{D}(t)\to 0$ as $t\to \infty$. We show that
$$
\lim_{K \to \infty} \sup_{\substack{x,y\in \R^n \\ \min(|x|, |y|) \geq K}} \frac{\|D^m F(x)-D^m F(y)\|}{\omega(|x-y|)} =0.
$$

\smallskip

\textbf{Case 1}. Let $x\in \R^n \setminus E,$ $y\in E$. Given $\varepsilon>0,$ let $M>0$ be such that $\mathrm{D}(M) \leq \varepsilon$. We consider $|x| \geq K\gg M$ for a large constant $K$ to be soon determined. Let $\xi = \xi(x) \in E$ minimize $\dist(x,E)=|x-\xi|.$  Applying Lemma \ref{lem:derivatives:higherorder}\ref{lemmaestimates:partaapproximationDmF} and bearing in mind the property \ref{Wh:bdoverlapp}, we find a cube $Q_x\in \mathcal{Q}$ with $x\in (Q_x)^*$, and $z\in \lbrace \xi, y \rbrace$ so that 
\begin{align}\label{eq:v2}
	\frac{\|D^m F(x)-D^m F(y)\|}{\omega(|x-y|)}  & \leq \frac{\sum_{Q^* \ni x}R (\mathcal{A}, y,p_Q)+  R (\mathcal{A},\xi,p_Q) }{\omega(|x-y|)} \lesssim_n \frac{R (\mathcal{A}, z,p_{Q_x})     }{\omega(|x-y|)} \nonumber  \\
	& \leq \min\lbrace \mathrm{D}(|p_{Q_x}|), \mathrm{D}(|z|), \|\mathcal{A}\|_{\dot{\op{J}}^{m,\omega}(E,V)} \rbrace \frac{\omega(|p_{Q_x}-z|)}{\omega(|x-y|)} .
\end{align}

The last inequality is a consequence of the definition of $\mathrm{D}(t).$ Now, if either $|p_{Q_x}| \geq M$ or $|z|\geq M,$ the minimum in the term \eqref{eq:v2} is smaller than $\varepsilon$ due to the choice of $M.$ And because $|p_{Q_x}-z|\leq C|x-y|$ for an absolute constant $C>0$ (see \ref{Wh:estimatedistancecubes}); we conclude that \eqref{eq:v2} is bounded by an absolute multiple of $\varepsilon$ in this particular case. And if $|p_{Q_x}| \leq M$ and $|z|\leq M,$ then $|p_{Q_x}-z| \leq 2M,$ and so $\omega(|p_{Q_x}-z|) \lesssim \omega(M)$ and we bound
$$
\op{RHS}\eqref{eq:v2}\lesssim  \frac{\omega(M)}{\omega( ||x|-M|)} \lesssim  \frac{\omega(M)}{\omega\left( K- M \right)} .
$$
By $\omega(\infty) =\infty$ the right-hand side is smaller than $\varepsilon$ provided that $K\gg M$ is taken sufficiently large.

\smallskip

\textbf{Case 2}. Assume $x,y \in \R^n \setminus E$ and $d([x,y],E) \leq |x-y|.$

Let $\varepsilon>0$ and let $R>0$ be as in Case 1. Suppose that $|x|, |y| \geq K.$ Let $p \in E$ be such that $d([x,y], E) =d([x,y], p),$ and then $|x-p|, |y-p| \leq 2|x-y|,$ as we have already seen several times.  By Case 1 applied to the pairs of points $x\in \R^n \setminus E,$ $p\in E$ and $y \in \R^n \setminus E$, $p\in E$, 
we have
\[
\|D^m F(x)-D^m F(p)\| \lesssim \varepsilon \omega(|x-p|),\qquad \| D^m F(y)- D^m F(p)\| \lesssim \varepsilon \omega(|y-p|). 
\]
Thus the claim follows by triangle inequality.

\textbf{Case 3}. Assume $x,y \in \R^n \setminus E$ and $d([x,y],E) \geq |x-y|.$

Let $\varepsilon>0,$ and $K \gg M$ be the parameters we used in Case 1. Note that we can enlarge $K$ if necessary so as to satisfy 
\begin{equation}\label{eq:farCase3:choiceK}
K \geq 2M \quad \text{and} \quad  \omega(M) \leq \varepsilon \omega(K).
\end{equation}
Note that condition \eqref{eq:mod:coer} guarantees the existence of such $K.$ Let us also assume that $|x| \geq K.$  Following the lines \eqref{eq:z00} and \eqref{eq:z1}, we find $z\in [x,y]$, a point $\xi = \xi(z) \in E$ minimizing $\dist(z,E) = |z-\xi |$, and a cube $Q_z \in \mathcal{Q}$ with $z\in Q^*$ so that
\begin{equation}\label{eq:farCase3:inicial}
			\frac{\|D^m F(x)-D^m F(y)\|}{\omega(|x-y|)} \lesssim_n \frac{|x-y|}{\omega(|x-y|)} \frac{R(\mathcal{A},\xi,p_{Q_z})}{\dist(z,E)}   .
\end{equation}
First of all, observe that when $| \xi | \geq M$ or $|p_{Q_z}| \geq M$, then $R(\mathcal{A},\xi,p_{Q_z}) \leq \varepsilon \omega(|\xi-p_{Q_z}|),$ with $|\xi-p_{Q_z}| \leq C |\xi-z| =C d(z,E).$ Thus, in this particular subcase, using \eqref{eq:mod:ext1} we can estimate
\begin{equation}\label{eq:farCase3:easycase}
\op{RHS}\eqref{eq:farCase3:inicial} \lesssim \frac{|x-y|}{\omega(|x-y|)} \frac{\omega(\dist(z,E))}{\dist(z,E)} \varepsilon \lesssim_{C_\omega} \varepsilon.
\end{equation}
 
Therefore, we will assume from now on that $|\xi|, |p_{Q_z}| \leq M.$ Now, we look at the last numerator in \eqref{eq:farCase3:inicial}; observe that, by the definition of $R(\mathcal{A},\xi,p_{Q_z}),$ and the fact that $F$ extends the jet $\mathcal{A}$ from $E$ to $\R^n,$ we can find some $k \in \lbrace 0, \ldots ,m \rbrace$ and some $\widetilde{\xi} \in [\xi, p_{Q_z}]$ so that
\begin{align}\label{eq:farCase3:taylorestimate}
 R(\mathcal{A},\xi,p_{Q_z}) & \leq   \frac{\|A_k(\xi)-\sum_{j=0}^{m-k} \frac{1}{j!} A_{k+j}(p_{Q_z})(\xi-p_{Q_z}) \|}{ |\xi-p_{Q_z}|^{m-k}  } \nonumber \\
 &  =\frac{\|D^k F(\xi)-\sum_{j=0}^{m-k} \frac{1}{j!} D^{k+j} F(p_{Q_z})(\xi-p_{Q_z}) \| }{|\xi-p_{Q_z}|^{m-k}} \lesssim   \|D^m F(\widetilde{\xi})-D^m F(p_{Q_z})\|.
\end{align}
Now, since $F\in \dot{C}^{m,\omega}(\R^n,V)$ with $\|F\|_{\dot{C}^{m,\omega}(\R^n,V)} \lesssim_{n,m,C_\omega} \|\mathcal{A}\|_{\dot{\op{J}}^{m,\omega}(E,V)},$ and $| \widetilde{\xi}-p_{Q_z}| \leq | \xi-p_{Q_z}| \leq 2 M,$ we can combine the estimates \eqref{eq:farCase3:inicial} and \eqref{eq:farCase3:taylorestimate} to derive
\begin{align}\label{eq:farCase3final}
\frac{\|D^m F(x)-D^m F(y)\|}{\omega(|x-y|)} & \lesssim_{n,m,C_\omega} \frac{|x-y|}{\omega(|x-y|)} \frac{\|D^m F(\widetilde{\xi})-D^m F(p_{Q_z})\|}{d(z,E)} \nonumber \\
&  \lesssim_{ \|\mathcal{A}\|_{\dot{\op{J}}^{m,\omega}(E,V)}} \frac{|x-y|}{\omega(|x-y|)} \frac{\omega(|\widetilde{\xi}-p_{Q_z}|)}{d(z,E)} \lesssim_{C_\omega} \frac{|x-y|}{\omega(|x-y|)} \frac{\omega(M)}{d(z,E)}.
\end{align}

To complete the proof, observe that \eqref{eq:farCase3:choiceK} gives
$$
d(z,E) = |z- \xi| \geq |z|-|\xi| \geq K - M \geq K/2,
$$
and $\omega(M) \leq \varepsilon \omega(K) \lesssim_{C_\omega} \varepsilon \omega(d(z,E)).$ By plugging this estimate into $\op{RHS}$\eqref{eq:farCase3final} we obtain
$$
\frac{\|D^m F(x)-D^m F(y)\|}{\omega(|x-y|)} \lesssim \frac{|x-y|}{\omega(|x-y|)} \frac{\omega(d(z,E))}{d(z,E)} \varepsilon \lesssim_{C_\omega} \varepsilon;
$$
the last bound being a consequence of \eqref{eq:mod:ext1} and $|x-y| \leq d(z,E).$

\end{proof}

\bibliographystyle{abbrv}
\bibliography{references.bib}

\end{document}